\documentclass[a4paper,11pt,noindent]{amsart}
\usepackage[centertags]{amsmath} 
\usepackage{amsfonts,amssymb,amsthm}
\usepackage{hyperref}
\usepackage{mathtools}

 \hypersetup{pdfmenubar=false, 
pdfnewwindow=true, 
colorlinks=false, 
linkcolor=blue, 
citecolor=blue, 
filecolor=magenta, 
urlcolor=cyan 
} 
\usepackage{graphicx}

\usepackage[english]{babel} \usepackage{newlfont} \usepackage{color}
\usepackage[body={15cm,21.5cm},centering]{geometry} \usepackage{fancyhdr}
\pagestyle{fancy} \usepackage{esint} \usepackage{enumerate}

 \newcommand{\Tr}{\mathrm{Tr}\,}

\newcommand{\red}[1]{#1}

\fancyhead[RO,LE,LO,RE]{}
\fancyhead[CO]{\scriptsize\rightmark} \fancyhead[CE]{\scriptsize\leftmark}
 \setlength{\headheight}{12pt}
\newcommand{\Rsym}{\R^{n\times n}_{\mathrm{sym}}}
\setlength{\parindent}{0pt} \setlength{\headsep}{25pt}
\usepackage[active]{srcltx}

\newtheorem{theorem}{Theorem} \newtheorem{lemma}{Lemma}
\newtheorem{proposition}{Proposition} 
 
\newtheorem{definition}{Definition}

\theoremstyle{definition} 
 
\newtheorem{remark}{Remark}

\newcommand{\dist}{\operatorname{dist}} 

\newcommand{\e}{\varepsilon}

\newcommand\ecke{\mathop{\hbox{\vrule height 7pt width .3pt depth 0pt \vrule
height .3pt width 5pt depth 0pt}}\nolimits}

\newcommand{\R}{\mathbb{R}} 
\newcommand{\N}{\mathbb{N}} 
 
\renewcommand{\d}{\mathrm{d}} \renewcommand{\L}{\mathbb{L}}
\newcommand{\A}{\mathcal{A}} \newcommand{\M}{\mathcal{M}}
 \newcommand{\id}{\mathrm{Id}}

\newcommand{\G}{\mathcal G}

 \renewcommand{\H}{\mathcal{H}}
 
\renewcommand{\L}{{\mathcal L}}

\newcommand\wsto{\stackrel{*}{\rightharpoonup}}

\renewcommand{\div}{\mathrm{div}\,}




\newcommand{\rlim}[1]{\rho^{(#1)}}
\newcommand{\hh}[1]{h^{(#1)}}

\makeatletter
\newsavebox{\@brx}
\newcommand{\llangle}[1][]{\savebox{\@brx}{\(\m@th{#1\langle}\)}%
  \mathopen{\copy\@brx\kern-0.5\wd\@brx\usebox{\@brx}}}
\newcommand{\rrangle}[1][]{\savebox{\@brx}{\(\m@th{#1\rangle}\)}%
  \mathclose{\copy\@brx\kern-0.5\wd\@brx\usebox{\@brx}}}
\makeatother

\synctex=1

\title{Michell truss type theories as a $\Gamma$-limit of optimal design in linear elasticity}
\date{\today}
\author[H. Olbermann] {Heiner Olbermann}
\address[Heiner Olbermann]{UCLouvain, Belgium}
\email{heiner.olbermann@uclouvain.be}

\date{\today} 

\begin{document}

\maketitle

\begin{abstract}
 We show how to derive (variants of) Michell truss theory in two and three dimensions rigorously as the  vanishing weight limit of optimal design problems in linear elasticity in the sense of $\Gamma$-convergence. We improve our results from \cite{olbermann2017michell} in that our treatment here includes the three dimensional case and that we allow for more general boundary conditions and applied forces. 
\end{abstract}

\section{Introduction}

In the present article we improve our results from 
\cite{olbermann2017michell}, where we have derived a certain form of Michell
truss theory as the vanishing weight limit of optimal design problems in linear
elasticity in a rigorous fashion.  The
improvement that we present here is twofold: First, we extend the analysis to
the three-dimensional case. Second, we allow for more general applied forces,
see Remark \ref{rem:mainthm} (v) below.

\medskip

We briefly
explain how the variational problem for finite values of the ``weight''
parameter that we will present in this introduction can be interpreted as
an optimal design problems in linear elasticity  in Section
\ref{sec:derivation-variational} of the appendix. For a \red{short} discussion of how our
limit problem can be considered as  the Michell truss problem (at least for the case of
two dimensions), see \red{Section \ref{sec:very-brief-pres} of the appendix and}  \cite{bouchitte2008michell}. Michell trusses, first devised
more than a century ago \cite{lviii185limits}, are a very popular model in
applied mathematics and engineering, see
e.g.~\cite{hemp1973optimum,rozvany2012structural,lewinski2018michell}.

\medskip

On a formal level, the relation between these variational models -- in both two and three dimensions -- had been observed by Allaire and Kohn  \cite{allaire1993optimal}.  As in \cite{olbermann2017michell}, our statements  should be viewed as  rigorous versions of their formal ones, in the framework of $\Gamma$-convergence.

\subsection{Notation}
\label{sec:notation}
Let $\L^d$ denote the $d$-dimensional Lebesgue measure, and  $\H^d$  the $d$-dimensional Hausdorff measure.
Let $E\subset \R^n$ be  either open or closed.
By $\mathcal M(E)$ (respectively $\mathcal M(E;\R^p)$)  we denote the space of Borel signed measures on $E$ (respectively $\R^p$-valued Borel measures). We denote the symmetric $d\times d$ matrices by $\R^{d\times d}_{\mathrm{sym}}=\{A\in\R^{d\times d}:A^T=A\}$.
The space $\mathcal M(E;\R^{d\times d}_{\mathrm{sym}})$ is the subspace of $\mu\in\mathcal M(E;\R^{d\times d})$ satisfying $\mu_{ij}=\mu_{ji}$ for all $i,j\in\{1,\dots,d\}$.

The set of non-negative Borel measures is denoted by $\mathcal M^+(E)$. 

For $\Omega\subset\R^n$ open and bounded with Lipschitz boundary, consider $\mu\in\mathcal M(\overline\Omega;\R^n)$ and $g\in \mathcal M(\overline\Omega)$. We say that $-\div\mu=g$ if
\[
  \int_{\overline\Omega}\sum \partial_{x_i}\varphi \d\mu_i=\int_{\overline\Omega}\varphi\d g\,
\]
for every compactly supported $\varphi\in C^1(\R^n)$. Put differently, the measures $\mu$ and $g$ are being viewed as measures on $\R^n$ with support on $\overline\Omega$.  When $\mu\in \mathcal M(\overline\Omega;\Rsym)$ and $g\in \mathcal M(\overline\Omega;\R^n)$, then $-\div \mu=g$ has to be understood row-wise.

\medskip

Let $1<p<\infty$, and $U\subset\R^n$ open. By $W^{-1,p}(U)$, we denote the dual of $W^{1,p'}_0(U)$, where $(p')^{-1}=1-p^{-1}$. It is well known that the following  norm on $W^{-1,p}(U)$ is equivalent to the norm as a dual space of $W^{1,p'}_0$, 
\[
  \|g\|_{W^{-1,p}(U)}=\inf\left\{\|\alpha\|_{L^p(U)}+\|\beta\|_{L^p(U)}:\,g=\alpha+\div\beta\right\}\,,
\]
where the equation $g=\alpha+\div\beta$ has to be understood in the sense of distributions,
\[
  \langle g,\varphi\rangle=\int_{U} \left(\varphi\alpha -\nabla\varphi\cdot \beta\right)\d x
  \]
  for all $\varphi\in C^1_c(U)$.

  By slight abuse of notation, we will write
  \[
    \mathcal M \cap W^{-1,p}(\overline\Omega)\equiv     \mathcal M(\overline\Omega)\cap W^{-1,p}(\R^n)\,.
    \]


\medskip

 For $\lambda>0$, we define $\tilde h_{\lambda}:\Rsym\to \R$ by
  \begin{equation}
  \tilde h_\lambda(\sigma)=\begin{cases}
    \frac{|\sigma|^2}{\sqrt{\lambda}}+\sqrt{\lambda} &\text{ if } \sigma\neq 0\\
    0 & \text{ if }\sigma=0\,,\end{cases}\label{eq:13}
  \end{equation}
where $|\cdot|$ denotes the Frobenius norm defined by $|A|^2=\Tr A^TA$.
  Now let $g\in \mathcal M\cap W^{-1,2}(\overline\Omega;\R^n)$,  to be thought of as the applied forces and normal component of the stress $\sigma$ at the boundary respectively (see Remark \ref{rem:mainthm} (iii) below).
  We define $\mathcal G_{\lambda,g}:\mathcal M(\overline\Omega;\Rsym)\to\R$ by
  \[
    \mathcal G_{\lambda,g}(\mu):=
    \begin{cases}
      \int_\Omega \tilde h_\lambda \left(\frac{\d\mu}{\d\L^n}\right)\d x & \text{ if
      }\mu\ll \L^n,\,\frac{\d\mu}{\d\L^n}\in L^2(\Omega;\Rsym) \text{ and } -\div \mu=g \text{ in }\overline\Omega\\
      +\infty & \text{ else.}
\end{cases}
\]
where $\mu\ll\L^n$ is the notation for $\mu$ being absolutely continuous with respect to $\L^n$, and $\frac{\d\mu}{\d\L^n}$ denotes the Radon-Nikod\'ym derivative of $\mu$ with respect to $\L^n$.

The variational functional $\mathcal G_{\lambda,g}$  defines an optimal design problem in linear elasticity, see Section \ref{sec:derivation-variational} of the appendix.

For $\sigma\in\Rsym$, let $\sigma_i$, $i=1,\dots,n$ denote the eigenvalues of $\sigma$, ordered such that $|\sigma_1|\leq|\sigma_2|\leq\dots\leq |\sigma_n|$.

\medskip

From now on, we will only be concerned with the case $n\in\{2,3\}$.
For $n=2$ we define
\[
    \rlim{2}(\sigma)=|\sigma_1|+|\sigma_2|
  \]
and for $n=3$, we let
\[
  \rlim{3}(\sigma)=\begin{cases} \frac{1}{2} \sqrt{
      (|\sigma_1|+|\sigma_2|)^2+\sigma_3^2} & \text{ if } |\sigma_1|+|\sigma_2|\leq
    |\sigma_3|\\
    \frac{1}{2\sqrt{2}} (|\sigma_1|+|\sigma_2|+|\sigma_3|) &\text{ else.}\end{cases} 
\]
Note that $\rlim{n}$ is positively one-homogeneous; hence for a $\Rsym$-valued Radon measure $\mu$, $\rlim{n}(\mu)$ can be defined as a Radon measure via

\[
  \rlim{n}(\mu)(A)=\int_A \rlim{n}\left(\frac{\d\mu}{\d|\mu|}\right)\d|\mu|\,,
\]
where $\frac{\d\mu}{\d|\mu|}$ is the Radon-Nikod\'ym derivative of $\mu$ with respect to its total variation measure $|\mu|$.
For $g\in \mathcal M(\overline\Omega;\R^n)$, we define $\mathcal G_{\infty,g}:\mathcal M(\overline\Omega;\Rsym)\to\R$ by 
\[
  \mathcal G_{\infty,g}(\mu)=
  \begin{cases}
    2\int_{\overline\Omega} \d \rlim{2}(\mu)&\text{ if } n=2, \,-\div \sigma=g\\
    \sqrt{2}\int_{\overline\Omega} \d \rlim{3}(\mu)&\text{ if } n=3, \,-\div \sigma=g\\
    +\infty &\text{ else.}
  \end{cases}
\]

\subsection{Statement of results}
\label{sec:statement-results}

We are ready to state our main theorem, namely the $\Gamma$-convergence
$\mathcal G_{\lambda,g_\lambda}\stackrel{\Gamma}{\to} \mathcal G_{\lambda,g}$
under the assumption of weak-* convergence of the applied forces,
$g_\lambda\wsto g$ with a $\lambda$-dependent control of
$\|g_\lambda\|_{W^{-1,2}}$. We recall that  $\Omega\subset\R^n$ is bounded open with Lipschitz boundary, where $n\in\{2,3\}$.

\begin{theorem}
  \label{thm:main}
   Assume that $g_\lambda\in \mathcal M\cap W^{-1,2}(\overline{\Omega};\R^n)$, $g\in \mathcal M(\overline\Omega;\R^n)$ such that
  $g_\lambda\wsto g$ in $\mathcal M(\overline\Omega;\R^n)$ and
  \begin{equation}\label{eq:9}
  \lambda^{-1/4}\|g_\lambda\|_{W^{-1,2}(\R^n;\R^n)}\to 0\,.
\end{equation}
  \begin{itemize}
  \item[(i)] (Compactness) Let $\mu_\lambda\in \M(\overline\Omega;\R^n)$ be a sequence such that 
    \[\limsup_{\lambda} \mathcal G_{\lambda,g_\lambda}(\mu_\lambda)<\infty\,.
    \]
    Then there exists $\mu\in \mathcal M(\overline\Omega;\Rsym)$ such that
      \begin{equation}\label{eq:4}
      \begin{split}
        \mu_\lambda\wsto \mu \text{ in }\mathcal M(\overline\Omega;\Rsym)\,.
      \end{split}
    \end{equation}
    \item[(ii)] (Lower bound) Let $\mu_\lambda\wsto \mu$ in $\M(\overline\Omega;\Rsym)$.
       Then we have that
      \[
        \liminf_{\lambda\to\infty}\mathcal
        G_{\lambda,g_\lambda}(\mu_\lambda)\geq \mathcal
        G_{\infty,g}(\mu)\,.
      \]
      \item[(iii)] (Upper bound) Let $\mu\in \mathcal M(\overline\Omega;\Rsym)$. Then there
    exists a sequence $\mu_\lambda$ in $L^2(\Omega;\Rsym)$ such that $\mu_\lambda\wsto \mu$ in $\M(\overline\Omega;\Rsym)$ and additionally
    \[
        \limsup_{\lambda\to\infty}\mathcal
        G_{\lambda,g_\lambda}(\mu_\lambda)\leq \mathcal
        G_{\infty,g}(\mu)\,.
      \]
  \end{itemize}
\end{theorem}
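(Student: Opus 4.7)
For \textbf{(i)}, the elementary arithmetic--geometric mean inequality gives $\tilde h_\lambda(\sigma) \geq 2|\sigma|$ on all of $\Rsym$ (the $\sigma=0$ case is trivial), so $|\mu_\lambda|(\overline\Omega) \leq \tfrac12 \mathcal G_{\lambda, g_\lambda}(\mu_\lambda)$ is uniformly bounded. Weak-$*$ precompactness in $\mathcal M(\overline\Omega; \Rsym)$ follows by Banach--Alaoglu, and the divergence constraint $-\div\mu_\lambda = g_\lambda$ is preserved in the limit by testing against compactly supported $C^1$ vector fields; since $g_\lambda \wsto g$, the limit satisfies $-\div\mu = g$.

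The crux is \textbf{(ii)}. Note $\tilde h_\lambda(\sigma)\to+\infty$ for every $\sigma\neq 0$, so any sequence of bounded energy has densities $u_\lambda$ whose supports contract: heuristically, $\mu$ is singular w.r.t.\ $\mathcal L^n$ and concentrated on a lower-dimensional set of specific rank structure. The plan is to combine a Legendre--Fenchel duality with a blow-up/tangent-measure analysis. A direct computation gives $\tilde h_\lambda^*(\tau) = \max\{0,\tfrac{\sqrt\lambda}{4}(|\tau|^2-4)\}$, hence for any $\tau\in C^0(\overline\Omega;\Rsym)$ with $|\tau|\leq 1$ the pointwise bound
\[
\tilde h_\lambda(\sigma) \ \geq\ 2\,\sigma\!:\!\tau
\]
holds and integrates to $\int\tilde h_\lambda(u_\lambda)\,dx\geq 2\int\tau\!:\!u_\lambda\,dx$. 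Passing to the limit using $u_\lambda\,d\mathcal L^n \wsto \mu$ and taking the supremum over admissible $\tau$ yields a liminf bound in dual-pairing form. To identify this supremum with $C_n\int\rlim{n}(\mu)$ ($C_n=2$ if $n=2$, $\sqrt 2$ if $n=3$) I would use the dual representation
\[
\int_{\overline\Omega}\rlim{n}(\mu) \ =\ \sup\Bigl\{\textstyle\int \tau\!:\!d\mu : \tau\in C_0(\overline\Omega;\Rsym),\ (\rlim{n})^*(\tau)\leq 1\Bigr\}
\]
combined with a blow-up argument at $|\mu|$-a.e.\ point establishing the rank structure of the tangent measures (rank-one line distributions in 2D; the laminated structure distinguished by the two cases of $\rlim{3}$ in 3D). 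Choosing the test field $\tau$ rank-matched to the local tangent measure makes the pointwise inequality sharp and produces the correct Michell constant $C_n$.

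For \textbf{(iii)}, reduce by a density argument to $\mu$ of a canonical form: finite sums of rank-one smooth-line measures $a\,e\otimes e\,\mathcal H^1\lfloor\Gamma$ in 2D, and the analogous laminated building blocks in 3D that realize equality in the definition of $\rlim{3}$. On each piece, build $\tilde u_\lambda$ by spreading $\mu$ over a neighborhood of codimension-one thickness $\sim\lambda^{-1/4}$ with density $\sim\lambda^{1/4}$—the unique scaling saturating $\tilde h_\lambda\geq 2|\cdot|$ pointwise. The resulting $\tilde u_\lambda$ solves $-\div\tilde u_\lambda=\tilde g_\lambda$, not matching the prescribed $g_\lambda$; correct by $v_\lambda$ with $-\div v_\lambda=g_\lambda-\tilde g_\lambda$ from a Bogovskii-type right inverse of $\div$ so that $\|v_\lambda\|_{L^2}\lesssim\|g_\lambda-\tilde g_\lambda\|_{W^{-1,2}}$. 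The additional energy cost is at most $\lambda^{-1/2}\|v_\lambda\|_{L^2}^2 \lesssim (\lambda^{-1/4}\|g_\lambda\|_{W^{-1,2}})^2$, vanishing by \eqref{eq:9}; a diagonal subsequence finishes.

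The \textbf{main obstacle} lies in the sharpness of the constant $C_n$ in part (ii). The plain Frobenius AM--GM only yields $\liminf\mathcal G_{\lambda,g_\lambda}(\mu_\lambda)\geq 2|\mu|(\overline\Omega)$, which differs from the target $C_n\int\rlim{n}(\mu)$ by a rank-dependent factor; closing the gap forces one to align the direction and rank of the test field with the blow-up of $\mu$ at each point. The 3D case is substantially harder than the 2D case treated in \cite{olbermann2017michell}: the asymptotically optimal concentrations are not one-dimensional fibers but the two-dimensional laminations reflected by the two-case definition of $\rlim{3}$, and rigorously establishing the corresponding rectifiability/lamination structure for the weak-$*$ limit $\mu$ is the principal new technical input required.
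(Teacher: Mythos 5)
Part (i) is correct and coincides with the paper's argument. Parts (ii) and (iii), however, each contain a genuine gap.

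For the lower bound, your duality strategy cannot produce the sharp constant, and the obstacle is not the one you diagnose. A pointwise linear minorant $\tilde h_\lambda(\sigma)\geq 2\,\sigma\!:\!\tau$ valid for \emph{all} $\sigma\in\Rsym$ forces $|\tau|\leq 1$ in the Frobenius norm, so the supremum over such $\tau$ can only yield the \emph{convex} envelope, i.e.\ $\liminf\geq 2|\mu|(\overline\Omega)$. Your premise that bounded-energy limits $\mu$ are necessarily singular and carry a rank structure is false: $\mu$ is an arbitrary measure satisfying $-\div\mu=g$, and its absolutely continuous part can have full-rank density (recovery sequences concentrate on sets of measure $O(\lambda^{-1/2})$ whose weak-$*$ limits are typically absolutely continuous). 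The rank restriction $\frac{\d\mu}{\d|\mu|}(x_0)\in\Lambda_{\div}$ from De Philippis--Rindler holds only $|\mu^s|$-a.e., i.e.\ on the singular part; there "rank-matching" is indeed how the paper closes the constant (via Young measures, Jensen, and the convex function $H=h|_{\Lambda_\div}$). But at $\L^n$-a.e.\ point the blow-up is $\xi\L^n$ with $\xi$ possibly of full rank, where $C_n\rlim{n}(\xi)>2|\xi|$, and no choice of linear test field reaches $C_n\rlim{n}(\xi)$. What is needed there is the $\div$-\emph{quasiconvexity} of $h_\lambda=Q\tilde h_\lambda$ (the Allaire--Kohn formula, whose $\lambda^{-1/2}|\det\tau|$-type corrections come from quadratic null Lagrangians, not linear ones), combined with the Fonseca--M\"uller projection onto $\mathcal A$-free fields; this is where hypothesis \eqref{eq:9} enters the lower bound. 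Your proposal is missing this entire mechanism, and no rectifiability theory for $\mu$ is required or available.

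For the upper bound, your scaling is quantitatively wrong: spreading a line measure of linear density $a$ over thickness $t$ costs $a^2 t^{-1}\lambda^{-1/2}+t\lambda^{1/2}$ per unit length, which is minimized (giving $2a$) at $t\sim\lambda^{-1/2}$ with density $\sim\lambda^{1/2}$; your choice $t\sim\lambda^{-1/4}$, density $\sim\lambda^{1/4}$, makes the $\sqrt\lambda$-volume term blow up like $\lambda^{1/4}$. Moreover, the reduction to finite sums of rank-one line measures (and laminated blocks in 3D) is an unproved density theorem that the paper avoids entirely: one simply sets $\bar\mu=\mu-e(\zeta)$ with $-\div e(\zeta)=g$, mollifies at a scale $\e(\lambda)$ chosen so that $|\eta_{\e}*\bar\mu|\leq\frac14\sqrt\lambda$, adds back $e(\zeta_\lambda)$ to restore $-\div\mu_\lambda=g_\lambda$, and uses the explicit formula $h_\lambda(\tau)\leq h(\tau)$ on $\{\rlim{n}(\tau)\leq\sqrt\lambda\}$ together with $\lambda^{-1/4}\|e(\zeta_\lambda)\|_{L^2}\to 0$ from \eqref{eq:9}. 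Your idea of correcting the divergence by an elastic auxiliary field with energy cost controlled by $\lambda^{-1/2}\|g_\lambda\|_{W^{-1,2}}^2$ is the right one; the construction of the main part of the recovery sequence is not.
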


\begin{remark}
  \label{rem:mainthm}
  \begin{itemize}
\item[(i)] The proofs for the compactness and upper bound parts are fairly
        straightforward; the most interesting part is the lower bound part.
Here our proof is inspired by the theorem on lower semicontinuity for linear
growth functionals under PDE constraints by Arroyo-Rabasa, De Philippis and
Rindler \cite{arroyo2017lower}. Their work in turn builds on  the properties of
singular points of $\mathcal A$-free measures \cite{MR3549629}, the blow-up technique by Fonseca and M\"uller \cite{MR1218685},  and properties of the
        projection operator to $\mathcal A$-free functions proved by the same
        authors in
        \cite{MR1718306}.
      \item[(ii)]
It is the combination of the blow-up technique with the application of the
projection operator to $\mathcal A$-free measures that informs our choice of
assumptions for the convergence of the right hand sides, i.e.
\[g_\lambda\wsto
g\text{ in }\mathcal M(\overline\Omega;\R^n)\quad\text{ and
}\quad\lambda^{-1/4}g_\lambda\to 0\text{ in }W^{-1,2}(\R^n;\R^n)\,.
\]
These assumptions (or slightly weaker ones) are necessary in order for this
method of proof to work. Also in the proof of the upper bound the 
assumption on the growth of the $W^{-1,2}$ norms is heavily used.  It is not
clear to us if the statements remain true if this assumption is removed.  
\item[(iii)] The constraint equation $-\div\mu=g$ contains boundary conditions and applied forces at the same time.
To substantiate this claim, we consider the situation $\mu= a\L^n\ecke \Omega$ with $a\in W^{1,1}(\Omega;\R^n)$ and $g=b  
\L^n\ecke\Omega+c \H^{n-1}\ecke\partial\Omega$ with $b\in L^1(\Omega)$, $c\in L^1(\partial \Omega)$.   Then the equation $-\div \mu=g $ translates to
\[
  \left\{
    \begin{split}
      -\div a&= b \quad\text{ in }\Omega\\a\cdot n&=c\quad\text{ on }\partial\Omega\,,
    \end{split}
  \right.
\]
where $n$ denotes the unit outer normal to $\partial\Omega$.
\item[(iv)] As an example for the approximation of applied forces,
      consider a point force $g=\sum_i
      g_i\delta_{x_i}$ (with $x_i\in\Omega$, $g_i\in\R^n$, and where $\delta_x$ denotes the Dirac measure supported  in $\{x\}$) which is permitted in the
      Michell truss problem in the sense that there exists a measure
      $\mu\in\mathcal M(\overline\Omega;\Rsym)$
      satisfying $g=-\div \mu$. Meanwhile,  any such $\mu$ cannot be absolutely continuous with respect to $\L^n$ with $\d\mu/\d\L^n\in L^2$, which implies that such $\mu$ is not permissible in the linear
      elasticity problems in the sense that
      $\G_{\lambda,g}(\mu)=+\infty$. A suitable
      approximation of the limit problem is given by some sequence $g_j$ satisfying $g_j\in W^{-1,2}$ and \eqref{eq:9}; this can be easily achieved, e.g., by mollification.
\item[(v)] In \cite{olbermann2017michell}, we only allowed for right hand sides
  of a very particular form. Namely, the stresses $\mu$ for the optimal design
  problems had to be solutions of boundary value problems
    \begin{equation}\label{eq:24}
      \left\{     \begin{split}
-\div \mu&=0\quad\text{ in }\Omega\\ \mu\cdot n&= \tilde g_\lambda\quad\text{ on
}\partial\Omega
\end{split}\right.
\end{equation}
where $\tilde g_\lambda\in W^{-1/2,2}(\partial\Omega)$, and $n$ denotes the unit
outer normal to $\partial \Omega$. Additionally, we required that $\Omega$ be simply connected and piecewise $C^2$. By (iii) above, this is
just a special case of the right hand sides that we are treating here. It was our method of proof that limited us to right hand sides
that correspond to \eqref{eq:24} in
\cite{olbermann2017michell}. There we reformulated  the problem as one for $BV$
functions, which moreover is only possible in two dimensions. 
\red{
\item[(vi)] One can  interpret the penalization parameter $\lambda$ as a
  Lagrange multiplier enforcing a constraint on the mass of the elastic body
  in the minimization problem for the compliance defined by
  $\G_{\lambda,g_\lambda}$ (see Section
  \ref{sec:derivation-variational} of the appendix). The connection between the
  constrained problem and the one we are considering is however only formal, see
  the discussion in \cite{MR820342}. As can be seen straightforwardly seen from  an
  inspection of our proof of the upper bound,  recovery sequences for the
  stresses  $\mu_\lambda$
  will
  typically be non-zero on a set of measure
  $O(\lambda^{-1/2})$  for singular limits $\mu$ that are singular with respect
  to the Lebesgue measure. In terms of the optimal design problem, the set where the stress is non-zero has to be understood
  as  occupied by  the elastic material, while the set where the stress is 0
  should be thought of as ``holes''. 
\item[(vii)] In \cite{MR2022556}, a general formula is proposed for describing the $\Gamma$-limit of optimal
design problems for vanishing volume fractions, also for non-linear cases. This
formula agrees with ours for the case we treat here.

}
          \end{itemize}
\end{remark}

\subsubsection*{Notation} The symbol $C$ will be used as follows: A statement $f\leq C(a,b,\dots)g$ has to be read as ``there exists a constant $C>0$ only depending on $a,b,\dots$ such that $f\leq C g$''. The value of $C$ may change from one inequality to the next. When it is clear on which quantities the constant depends, we also write $f\lesssim g$ in this situation.

\section{Preliminaries}

\subsection{$\mathcal A$-free singular measures}
\label{sec:mathcal-a-free}
Let $\mathcal A$ denote a linear partial differential operator of order $k\in\N$,
\[
  \mathcal A=\sum_{|\alpha|\leq k}A_\alpha\partial^\alpha\,,
\]
where $A_\alpha\in \R^{p\times m}$ for every multiindex $\alpha\in \N^n$ with
$\partial^\alpha=\partial_{x_1}^{\alpha_1}\dots \partial_{x_n}^{\alpha_n}$ and
$|\alpha|=\sum_{i=1}^n \alpha_i$. We define the principal
symbol of $\mathcal A$, $\mathbf A^k:\R^n\to \R^{p\times m}$,  by setting
\[
  \mathbf{A}^k(\xi)=\sum_{|\alpha|=k} A_\alpha \xi^\alpha\,,
\]
where $\xi^\alpha=\xi_1^{\alpha_1}\dots \xi_n^{\alpha_n}$.
In the following definition, $S^{n-1}=\{x\in\R^n:|x|=1\}$.
\begin{definition}
  The wave cone associated to a differential operator $\mathcal A$ as above is
  defined by
  \[
    \Lambda_{\mathcal A} =\bigcup_{\xi\in S^{n-1}} \mathrm{Ker} \mathbf A^k(\xi)\,.
    \]
  
  \end{definition}

We will only be interested in the case $\mathcal A=\div$, acting on  measures with values in $\Rsym$ (i.e., $p=n$, $m=n(n+1)/2$). In this case we obtain
\[
  \Lambda_\div=\{A\in \Rsym: \mathrm{Rk}\, A\leq n-1\}\,\,
\]
where $\mathrm{Rk}\, A$ denotes the rank of $A$.

\begin{definition}
    The operator $\mathcal A$ is said to satisfy the constant-rank condition if
    there exists $r\in \N$ such that
    \[
      \mathrm{Rk}\, \mathbf A^k(\xi)=r\quad\text{ for all }\xi\in S^{n-1}\,.
      \]
  \end{definition}
One easily verifies that the constant-rank condition is fulfilled for $\mathcal A=\mathrm{div}$ with $r=1$.

\medskip

The structure of $\A$-free singular measures by De Philippis and Rindler yields
in particular the following result:
\begin{theorem}[See \cite{MR3549629}]
\label{thm:DPR}  Let $\Omega\subset \R^n$ be open and $\mu\in \M(\Omega;\R^m)$ satisfy $\A
  \mu=\sigma$, where $\sigma\in \M(\Omega;\R^p)$. Then for $|\mu^s|$ a.e. $x_0$, we have that
  \[
    \frac{\d\mu}{\d|\mu|}(x_0)\in\Lambda_{\A}\,.
    \]
\end{theorem}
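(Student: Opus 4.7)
The plan is to combine a blow-up argument at singular points of $\mu$ with a rigidity result for $\A$-free measures concentrated along a fixed direction. This is precisely the strategy of \cite{MR3549629}, and it can be organised in three steps.

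First, at $|\mu^s|$-a.e.\ $x_0$, I would consider tangent measures $\nu$ to $\mu$ at $x_0$, obtained as weak-$*$ limits (along subsequences $r_j\to 0$) of the rescalings
\[
\mu_{x_0,r}(A) := \frac{1}{c_r}\,\mu(x_0+rA)\,,
\]
with the normalization $c_r>0$ chosen so that $\nu$ is non-trivial. Standard differentiation theory for Radon measures gives that at $|\mu^s|$-a.e.\ $x_0$ every tangent measure admits the polar factorization $\nu = P_0\,\rho$, where $P_0 := \tfrac{\d\mu}{\d|\mu|}(x_0)\in S^{m-1}$ is the asymptotic polar direction and $\rho\in\M^+(\R^n)\setminus\{0\}$. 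Moreover, since $x_0$ belongs to the singular set of $\mu$, $\rho$ can be chosen to be singular with respect to $\Lm^n$.

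Second, I would check that the differential constraint survives the blow-up. Under rescaling, $\A\mu = \sigma$ transforms into $\A\mu_{x_0,r} = r^k c_r^{-1}\,T^{x_0,r}_{\#}\sigma$. Since $|\sigma|(\Omega)<\infty$ while at singular points $c_r$ grows faster than $r^{n-k}$, the right-hand side vanishes in $\mathcal{D}'(\R^n)$. Hence $\A\nu=0$, i.e.\ $\A(P_0\rho)=0$ on $\R^n$.

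The crux is then the following rigidity lemma, which reduces the theorem to an intrinsic statement about $\A$-free measures of fixed polar direction: if $P_0\in\R^m$ and $\rho\in\M^+(\R^n)\setminus\{0\}$ satisfy $\A(P_0\rho)=0$, then $P_0\in\Lambda_\A$. Granted this, the polar direction $\tfrac{\d\mu}{\d|\mu|}(x_0)=P_0$ lies in $\Lambda_\A$ for $|\mu^s|$-a.e.\ $x_0$, which is the claim. The proof of the lemma itself proceeds by contradiction: if $P_0\notin\Lambda_\A$, then the scalar symbol $\xi\mapsto\mathbf{A}^k(\xi)P_0$ does not vanish on $S^{n-1}$, which is an ellipticity-type condition. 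A Fourier multiplier computation built from this non-degeneracy, together with $\A(P_0\rho)=0$, then forces $\rho$ to have positive lower Lebesgue density at its support points, so $\rho\ll\Lm^n$, contradicting the singularity secured in the first step.

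The main obstacle is this rigidity lemma. The blow-up reduction and the passage of $\A$-freeness to tangent measures are formal manipulations that rely only on standard measure-theoretic tools. The rigidity step, by contrast, requires a genuinely new harmonic-analytic input: a dimensional estimate for Fourier multipliers acting on measures that substitutes for the failure of $L^1\to L^1$ boundedness of Calder\'on-Zygmund operators. This estimate is the central novelty of \cite{MR3549629}, and there is no easy shortcut avoiding it.
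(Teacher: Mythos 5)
The paper does not prove this theorem: it is quoted from De Philippis--Rindler \cite{MR3549629}, so there is no in-paper argument to compare with and your proposal has to stand on its own. It has two gaps, one of which breaks the logic of the argument as you have arranged it. The first you acknowledge yourself: the rigidity lemma (``$\A(P_0\rho)=0$ with $\rho\in\mathcal M^+$ and $P_0\notin\Lambda_{\A}$ imply $\rho\ll\L^n$'') is asserted, not proved, and it carries essentially the whole content of the theorem; a proof that defers its hardest ingredient to the reference being proved is not a proof.

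The second gap is in your Step 1 and is a genuine error rather than an omission. You claim that, since $x_0$ lies in the singular set, the tangent measure $\rho$ ``can be chosen to be singular with respect to $\L^n$''. This is false in general: Preiss constructed a probability measure on $\R$ that is purely singular with respect to $\L^1$ and whose nonzero tangent measures at almost every point are exactly the positive multiples of $\L^1$. Consequently the contradiction you intend to run --- rigidity gives $\rho\ll\L^n$, singularity of $x_0$ gives $\rho\perp\L^n$ --- is not available, and this obstruction is precisely why the theorem resisted the naive blow-up strategy. De Philippis and Rindler circumvent it with a \emph{quantitative} form of the rigidity statement: their multiplier estimate bounds the total mass of the blow-up limit on a smaller ball by the $L^1$-masses of the absolutely continuous parts of the rescaled measures plus terms controlled by $\A$ of the rescalings, and at $|\mu^s|$-a.e.\ $x_0$ these masses vanish in the limit because $\L^n(B(x_0,r))/|\mu|(B(x_0,r))\to 0$ there; the contradiction is then $\nu=0$ against the non-triviality of the tangent measure. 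Your Step 2 (persistence of the constraint for the principal part of $\A$ under blow-up) is essentially correct, though the stated reason the right-hand side vanishes is garbled: what one actually uses is $r^k\to 0$ together with the finiteness, at $|\mu|$-a.e.\ point, of the upper density of $|\sigma|$ with respect to $|\mu|$.
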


\subsection{Generalized Young measures}
Generalized Young measures -- roughly speaking -- are dual objects to functions with
linear growth at infinity. They have been introduced by DiPerna and Majda
\cite{diperna1987oscillations}. Here we follow closely the approach by
Kristensen and Rindler \cite{kristensen2010characterization}, \red{which in turn is
based on the work by Alibert and Bouchitt\'e \cite{alibert1997nonuniform}}. In
comparison to \cite{kristensen2010characterization}, we drop the
dependence of test functions on a variable $x\in\Omega$, since we will not need
this for our purpose.

\medskip

First we define a suitable set of functions with linear growth at infinity.
For $f\in C(\R^m)$ and $\xi\in B(0,1)\subset\R^m$, let
\[
  Tf(\xi)=(1-|\xi|)f\left(\frac{\xi}{1-|\xi|}\right)\,.
\]
We define
\[
  \mathbf E(\R^m)=\left\{f\in C(\R^m): Tf \text{ extends to a continuous function
      on }\overline {B(0,1)}\subset\R^m\right\}\,.
  \]

\begin{definition}
  A generalized Young measure $\nu$ parametrized by a set $\Omega\subset\R^n$ with values in $\R^m$ is a triple
  $(\nu_x,\lambda_\nu,\nu_x^\infty)$, where
  \begin{itemize}
  \item $(\nu_x)_{x\in\Omega}$ is a family of probability measures on $\R^m$,
  \item $\lambda_\nu\in\mathcal M^+(\overline\Omega)$ is a non-negative measure
    \item $(\nu_x^\infty)_{x\in\Omega}$ is a family of probability measures on $S^{m-1}$
    \end{itemize}
    such that $x\mapsto \nu_x$ is weakly * measurable with respect to $\L^n$,
    $x\mapsto \nu_x^\infty$ is weakly * measurable with respect to
    $\lambda_\nu$, and $\left(x\mapsto \langle |\cdot|,\nu_x\rangle\right)\in L^1(\Omega)$.
\end{definition}

In the above definition, weak * measurability means that for every
$f\in\mathbf E(\R^m)$, we have that
$x\mapsto \langle f(\cdot),\nu_x\rangle$ is $\L^n$-measurable, and
$x\mapsto \langle f(\cdot),\nu_x^\infty\rangle$ is $\lambda_\nu$-measurable.
The duality between generalized Young measures and functions $f\in \mathbf{E}(\R^m)$ is
defined by
\[
  \llangle f,\nu\rrangle=\int_\Omega \langle f,\nu_x\rangle \d x
  +\int_{\overline{\Omega}} \langle f^\infty,\nu_x^\infty\rangle\d\lambda_\nu(x)\,,
\]
where $f^\infty$ denotes the recession function of $f$,
\[
  f^\infty(\xi)=\limsup_{t\to\infty}\frac{f(t\xi)}{t}\,.
  \]

\medskip

By Jensen's inequality, we
have for convex $f$  that
\begin{equation}
\label{eq:10}  \begin{split}
    f\left( \langle \id, \nu_x\rangle \right)&\leq \langle f,\nu_x\rangle \quad
    \text{ for $\L^n$ a.e. } x\in\Omega\\
    f^\infty\left( \langle \id, \nu_x^\infty\rangle \right)&\leq \langle
    f^\infty,\nu_x^\infty\rangle \quad \text{ for $\lambda_\nu$ a.e. } x\in\Omega\,.
  \end{split}
\end{equation}

\medskip

By the Radon-Nikod\'ym Theorem, we may decompose any measure $\mu\in \mathcal
M(\overline\Omega;\R^m)$ into two parts, the one regular with respect to $\L^n$, and its
singular part:
\[
  \mu=\frac{\d\mu}{\d\L^n}\L^n+\mu^s\,.
  \]
Such a measure $\mu\in \mathcal M(\overline\Omega;\R^m)$ can be identified with a Young
measure $\delta[\mu]$ via
\[
  (\delta[\mu])_x=\delta_{\frac{\d \mu}{\d\L^n}(x)}, \quad
\lambda_{\delta[\mu]}=|\mu^s|,\quad (\delta[\mu])_x^\infty=\delta_{\frac{\d
    \mu^s}{\d|\mu^s|}(x)}
\]

We say that a sequence of Young measures $(\nu_j)_{j\in\N}$ converges weakly *
to a Young measure $\nu$ if for every (globally) Lipschitz function $f:\R^m\to\R$, we have that
$\llangle f,\nu_j\rrangle\to \llangle f,\nu\rrangle$.
In this case we write $\nu_j\stackrel{\mathbf Y}{\to}\nu$.

\medskip

We say that a sequence of measures $\mu_j$ generates a Young measure $\nu$ if we
have $\delta[\mu_j]\stackrel{\mathbf Y}{\to} \nu$ weakly * as Young measures.

\medskip

Finally, we have the following compactness result for generalized Young measures:
\begin{lemma}[\cite{kristensen2010characterization}, Corollary 2]
\label{lem:Youngcomp}  Let $(\nu_j)_{j\in\N}$ be a sequence of generalized Young measures such that
  the functions
  $x\mapsto \langle|\cdot|,(\nu_j)_x\rangle$ are uniformly bounded in
  $L^1$ and $\lambda_{\nu_j}(\overline\Omega)$ is uniformly bounded. Then there
  exists a generalized Young measure $\nu$ such that $\nu_j\stackrel{\mathbf
    Y}{\rightarrow} \nu$.
\end{lemma}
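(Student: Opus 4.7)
The plan is to identify each generalized Young measure $\nu_j$ with a positive Radon measure on the compact set $K := \overline\Omega \times \overline{B(0,1)}$, apply the Banach--Alaoglu theorem in $\mathcal M^+(K)$, and then disintegrate the weak-$*$ limit to recover the Young measure structure. The key observation is that the map $T:\mathbf E(\R^m)\to C(\overline{B(0,1)})$ has a linear inverse given by $T^{-1}g(\eta)=(1+|\eta|)\,g(\eta/(1+|\eta|))$ which sends nonnegative functions to nonnegative functions on $\R^m$.

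For each generalized Young measure $\nu$, I associate a positive Radon measure $\Xi[\nu]\in\mathcal M^+(K)$ characterized by
\[
\int_K \varphi(x)\,g(\xi)\,\d\Xi[\nu](x,\xi)=\int_\Omega\varphi(x)\langle T^{-1}g,\nu_x\rangle\,\d x+\int_{\overline\Omega}\varphi(x)\langle (T^{-1}g)^\infty,\nu_x^\infty\rangle\,\d\lambda_\nu(x)
\]
for $\varphi\in C(\overline\Omega)$, $g\in C(\overline{B(0,1)})$. Existence follows from the Riesz representation theorem on the compact Hausdorff space $K$, and positivity uses that $g\geq 0$ implies both $T^{-1}g\geq 0$ on $\R^m$ and $(T^{-1}g)^\infty\geq 0$ on $S^{m-1}$. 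Choosing $\varphi\equiv g\equiv 1$ and using the identities $T(1+|\cdot|)\equiv 1$ on $B(0,1)$ and $(1+|\cdot|)^\infty=1$ on $S^{m-1}$ yields
\[
\Xi[\nu](K)=|\Omega|+\int_\Omega\langle|\cdot|,\nu_x\rangle\,\d x+\lambda_\nu(\overline\Omega),
\]
so the hypotheses of the lemma translate to the uniform bound $\sup_j\Xi[\nu_j](K)<\infty$.

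By the Banach--Alaoglu theorem a subsequence converges weakly-$*$ to some $\Xi\in\mathcal M^+(K)$. To extract a generalized Young measure from $\Xi$, I split $\Xi=\Xi^{\mathrm{int}}+\Xi^{\mathrm{bdy}}$ with $\Xi^{\mathrm{int}}:=\Xi\ecke(\overline\Omega\times B(0,1))$ and $\Xi^{\mathrm{bdy}}:=\Xi\ecke(\overline\Omega\times S^{m-1})$. Pushing $\Xi^{\mathrm{int}}$ forward under the diffeomorphism $(x,\xi)\mapsto(x,\,\xi/(1-|\xi|))$ gives a finite positive measure on $\overline\Omega\times\R^m$; decomposing its projection onto $\overline\Omega$ into absolutely continuous and singular parts with respect to $\mathcal L^n$ and disintegrating yields, after a suitable renormalization, the probability measures $\nu_x$ on $\R^m$ for $\mathcal L^n$-a.e.\ $x\in\Omega$. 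The singular part of this projection, combined with the projection of $\Xi^{\mathrm{bdy}}$ onto $\overline\Omega$, produces $\lambda_\nu\in\mathcal M^+(\overline\Omega)$, and disintegrating $\Xi^{\mathrm{bdy}}$ with respect to $\lambda_\nu$ yields the probability measures $\nu_x^\infty$ on $S^{m-1}$. Convergence $\nu_j\Yto\nu$ then follows because for every Lipschitz $f:\R^m\to\R$ one has $Tf\in C(\overline{B(0,1)})$ and $\llangle f,\nu_j\rrangle=\int_K Tf\,\d\Xi[\nu_j]\to \int_K Tf\,\d\Xi=\llangle f,\nu\rrangle$ by the very definition of $\Xi[\cdot]$ and the decomposition above.

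The main obstacle is the bookkeeping in the third paragraph: one must reallocate mass so that the $\nu_x$ and $\nu_x^\infty$ are honest probability measures---the fiber of $\Xi^{\mathrm{int}}$ over a point $x$ has total mass strictly less than one whenever $\Xi^{\mathrm{bdy}}$ loads the fiber over that point, and this ``missing mass'' must be transferred to $\lambda_\nu$ in a measurable way---and one must verify $\left(x\mapsto\langle|\cdot|,\nu_x\rangle\right)\in L^1(\Omega)$, which follows from $\sup_j\Xi[\nu_j](K)<\infty$ together with Fatou's lemma applied to the Lebesgue-regular piece of the weak-$*$ limit.
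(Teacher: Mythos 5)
The paper does not prove this lemma at all --- it is quoted with a citation to Kristensen--Rindler --- so the relevant comparison is with the standard argument in that reference (going back to Alibert--Bouchitt\'e), which is precisely the strategy you chose: compactify via $T$, identify each $\nu_j$ with an element of $\mathcal M^+(K)$, apply Banach--Alaoglu, and disintegrate the limit. Your first two paragraphs are essentially correct: the formula for $T^{-1}$, the positivity of $g\mapsto T^{-1}g$ and of $(T^{-1}g)^\infty$, and the mass identity $\Xi[\nu](K)=|\Omega|+\int_\Omega\langle|\cdot|,\nu_x\rangle\,\d x+\lambda_\nu(\overline\Omega)$ all check out (modulo the routine point that the positive functional defined on product test functions $\varphi\otimes g$ must be extended to all of $C(K)$; it is cleaner to define $\Xi[\nu_j]$ directly on $C(K)$ by the explicit double-integral formula).

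The gap is in the disintegration step, which is the actual content of the lemma. First, you push $\Xi^{\mathrm{int}}$ forward under $(x,\xi)\mapsto(x,\xi/(1-|\xi|))$ \emph{without} the weight $1-|\xi|$. Since the defining identity reads, for $f=T^{-1}g$, $\int\varphi\langle f,\nu_x\rangle\,\d x=\int\varphi(x)\,(1-|\xi|)\,f(\xi/(1-|\xi|))\,\d\Xi^{\mathrm{int}}$, the measure whose fibers are the $\nu_x$ is the pushforward of $(1-|\xi|)\,\Xi^{\mathrm{int}}$; disintegrating the unweighted pushforward and ``renormalizing'' yields fiber measures that differ from the correct ones by the non-constant density $(1+|\eta|)^{-1}$. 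Second, you never explain why the $\nu_x$ are probability measures. The missing observation is that testing with $f\equiv 1$ (so $Tf(\xi)=1-|\xi|$ and $f^\infty=0$) gives $\int_K\varphi(x)(1-|\xi|)\,\d\Xi[\nu_j]=\int_\Omega\varphi\,\d x$ for every $j$; since $\varphi(x)(1-|\xi|)$ is continuous on $K$, this constraint survives the weak-$*$ limit, so the projection of $(1-|\xi|)\Xi^{\mathrm{int}}$ onto $\overline\Omega$ equals $\L^n\ecke\Omega$ exactly and its disintegration automatically consists of probability measures. The same constraint shows (because $1-|\xi|>0$ on $B(0,1)$) that $\Xi^{\mathrm{int}}$ puts no mass over any $\L^n$-null set, so the projection of $\Xi^{\mathrm{int}}$ has no singular part and the ``missing mass transfer'' you single out as the main obstacle is not a real phenomenon: one simply takes $\lambda_\nu=\pi_{\#}\Xi^{\mathrm{bdy}}$ and $\nu_x^\infty$ its disintegration, with no bookkeeping between the interior and boundary pieces. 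Finally, weak-$*$ convergence in $\mathcal M(K)$ only gives $\llangle f,\nu_j\rrangle\to\llangle f,\nu\rrangle$ for $f\in\mathbf E(\R^m)$, whereas the paper's definition of $\stackrel{\mathbf Y}{\to}$ tests against all Lipschitz $f$, not all of which lie in $\mathbf E$ (the recession limit may fail to exist); strictly speaking an additional approximation argument is needed to close that last step.
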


\subsection{$\mathcal A$-quasiconvexity}
Let $\mathcal A=\sum_{|\alpha|\leq k}A_\alpha \partial^\alpha$ be a partial
differential operator as in Section \ref{sec:mathcal-a-free} above. Let $Q=(-1/2,1/2)^n$ be the unit
cube in $\R^n$. The smooth $Q$-periodic functions with values in $\R^m$ are
denoted by $C^\infty_{\mathrm{per}}(Q;\R^m)$.
\begin{definition}
  \begin{itemize}
  \item[(i)] A Borel function $f:\R^m\to \R$ is said to be $\mathcal A$-quasiconvex
    if for every $\xi \in \R^m$ and every 
    $\varphi\in C^\infty_{\mathrm{per}}(Q;\R^m)$ satisfying
    \[
      \mathcal A\varphi=0 \qquad \text{ and }\qquad \int_Q\varphi(x)\d x=0
    \]
    we have that
    \[
      \int_Q f(\xi+\varphi(x))\d x\geq f(\xi)\,.
    \]
  \item[(ii)]
    For a Borel function $f:\R^m\to\R$, the $\mathcal A$-quasiconvexification of
    $f$, $Q_{\mathcal A}f$, is given by
          \[
      Q_{\mathcal A}f(\xi)=\inf\left\{\int_Q f(\xi+\varphi(x))\d x:\,\varphi\in
      C^\infty_{\mathrm{per}}(Q;\R^m) \cap \mathrm{Ker}\mathcal A\text{  with }\int_Q \varphi(x)\d x=0\right\}\,.
    \]
    
  \end{itemize}

\end{definition}

Since we will only be interested in the case $\mathcal A=\div$, we will write  $Qf\equiv Q_\div f$.

\medskip

In the following lemma, functions in $L^p(Q;\R^m)$ are identified with their $Q$-periodic extensions. Furthermore, let $W^{1,p'}_{\mathrm{per}}(Q)$ denote the $Q$-periodic functions in  $W^{1,p'}_{\mathrm{loc}}(\R^n)$, and let $W^{-1,p}_{\mathrm{per}}(Q)$ denote its dual (where $(p')^{-1}=1-p^{-1}$). For later usage, we remark that
\[
  \|f\|_{W^{-1,p}_{\mathrm{per}}(Q)}\lesssim \|f \chi_Q\|_{W^{-1,p}(Q)}\,.
  \]

  \begin{lemma}[\cite{MR1718306}, Lemma 2.14]
    \label{lem:Aproj}
    Let $\mathcal A$ be a first order differential operator as above that satisfies the constant rank condition and 
    $1<p<\infty$. There exists an operator $\mathcal P_{\mathcal A}:L^p(Q;\R^m)\to
    L^p(Q;\R^m)$ and a constant $C=C(p)>0$ such that
    \[
      \mathcal A \mathcal P_{\mathcal A} \varphi=0,\quad \int_Q \mathcal P_{\mathcal A} \varphi\,\d x=0, \quad
      \|\varphi-\mathcal P_{\mathcal A} \varphi\|_{L^p(Q;\R^m)}\leq C \|\mathcal A
      \varphi\|_{W^{-1,p}_{\mathrm{per}}(Q)}
    \]
    for every $\varphi\in L^p(Q;\R^m)$ with $\int_Q \varphi(x)\d x=0$.
  \end{lemma}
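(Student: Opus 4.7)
The plan is to construct $\mathcal{P}_{\mathcal{A}}$ as a Fourier multiplier on the torus $Q$ and to deduce the error estimate via the Mikhlin--H\"ormander multiplier theorem, with the constant rank condition providing the smoothness of the relevant symbols. For $\xi\in\Z^n\setminus\{0\}$, let $P(\xi)\in\R^{m\times m}$ denote the orthogonal projection onto $\ker\mathbf{A}^1(\xi)$. Since $\mathbf{A}^1$ is positively $1$-homogeneous and $\mathrm{Rk}\,\mathbf{A}^1(\xi)\equiv r$ on $S^{n-1}$, the map $\xi\mapsto P(\xi)$ extends to a $C^\infty$, positively $0$-homogeneous map on $\R^n\setminus\{0\}$; this regularity can be verified locally by producing a smoothly varying orthonormal basis of $\ker\mathbf{A}^1(\xi)$ via the implicit function theorem. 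I then define $\mathcal{P}_{\mathcal{A}}\varphi$ by prescribing its Fourier coefficients as $\widehat{\mathcal{P}_{\mathcal{A}}\varphi}(\xi)=P(\xi)\hat\varphi(\xi)$ for $\xi\neq 0$, and $\widehat{\mathcal{P}_{\mathcal{A}}\varphi}(0)=0$.

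The mean-zero and kernel conditions then follow directly from Fourier-side identities: the vanishing of the zeroth mode gives $\int_Q\mathcal{P}_{\mathcal{A}}\varphi\,dx=0$, while the matrix identity $\mathbf{A}^1(\xi)P(\xi)=0$ gives $\widehat{\mathcal{A}\mathcal{P}_{\mathcal{A}}\varphi}(\xi)=2\pi i\,\mathbf{A}^1(\xi)P(\xi)\hat\varphi(\xi)=0$ for all $\xi\in\Z^n$. Boundedness of $\mathcal{P}_{\mathcal{A}}$ on $L^p(Q;\R^m)$ is immediate from the Mikhlin multiplier theorem applied to the $0$-homogeneous matrix symbol $P(\xi)$.

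For the quantitative estimate, observe that $I-P(\xi)$ is the orthogonal projection onto $(\ker\mathbf{A}^1(\xi))^\perp$, on which $\mathbf{A}^1(\xi)$ acts as an isomorphism; letting $\mathbf{A}^1(\xi)^\dagger$ denote the Moore--Penrose pseudoinverse one obtains $I-P(\xi)=\mathbf{A}^1(\xi)^\dagger\mathbf{A}^1(\xi)$, and the constant rank condition again ensures that $\xi\mapsto\mathbf{A}^1(\xi)^\dagger$ is smooth on $\R^n\setminus\{0\}$ and positively $(-1)$-homogeneous. Set $m(\xi):=(2\pi i)^{-1}\mathbf{A}^1(\xi)^\dagger$ and let $T$ be the Fourier multiplier with symbol $m$; then from $\widehat{\mathcal{A}\varphi}(\xi)=2\pi i\,\mathbf{A}^1(\xi)\hat\varphi(\xi)$ one reads off that $\varphi-\mathcal{P}_{\mathcal{A}}\varphi=T(\mathcal{A}\varphi)$. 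Given any representation $\mathcal{A}\varphi=\alpha+\div\beta$ with $\alpha,\beta\in L^p(Q)$, split $T(\mathcal{A}\varphi)=T\alpha+T(\div\beta)$. The symbol of $T$ is positively $(-1)$-homogeneous, satisfies the Mikhlin conditions with room to spare, so $\|T\alpha\|_{L^p(Q)}\lesssim\|\alpha\|_{L^p(Q)}$; the symbol of $T\circ\div$ equals $m(\xi)\cdot 2\pi i\,\xi^\top$, which is positively $0$-homogeneous and smooth on $\R^n\setminus\{0\}$, hence $\|T(\div\beta)\|_{L^p(Q)}\lesssim\|\beta\|_{L^p(Q)}$ again by Mikhlin. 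Adding the two contributions and taking the infimum over admissible decompositions $(\alpha,\beta)$ yields the required bound by $\|\mathcal{A}\varphi\|_{W^{-1,p}_{\per}(Q)}$.

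The main obstacle in this plan is the smooth dependence of $P(\xi)$ and $\mathbf{A}^1(\xi)^\dagger$ on $\xi$: without the constant rank condition $\dim\ker\mathbf{A}^1(\xi)$ could jump on $S^{n-1}$, both symbols would become discontinuous there, and the Mikhlin criterion would no longer apply. Establishing the $C^\infty$ dependence of the projector under the constant rank hypothesis is the technical heart of the argument and is precisely where the assumption on $\mathcal{A}$ enters in an essential way.
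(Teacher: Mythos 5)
The paper does not actually prove this lemma---it is quoted verbatim from Fonseca and M\"uller \cite{MR1718306}---and your argument is essentially the proof given in that reference: define $\mathcal P_{\mathcal A}$ as the periodic Fourier multiplier with symbol the orthogonal projection $P(\xi)$ onto $\ker\mathbf{A}^1(\xi)$, use the constant rank condition to get smooth $0$-homogeneous (resp.\ $(-1)$-homogeneous) dependence of $P(\xi)$ and of the pseudoinverse $\mathbf{A}^1(\xi)^\dagger$, and conclude with the Mikhlin multiplier theorem. The proof is correct; the only points left implicit are the periodic (transference) version of Mikhlin's theorem and the fact that the $W^{-1,p}_{\mathrm{per}}(Q)$ norm of a mean-zero distribution is comparable to the infimum over \emph{periodic} decompositions $\alpha+\div\beta$, both of which are standard.
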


  \subsection{Tangent measures}
  \label{sec:tangent-measures}
  The notion of tangent measures is due to  Preiss \cite{preiss1987geometry}.
  We will only need one fact about tangent measures, for which it will not even
  be necessary to mention the definition. For $x_0\in\R^n$, $r>0$, let
  $T^{(x_0,r)}(x)=r^{-1}(x-x_0)$. The push-forward of a measure $\mu\in \mathcal
  M(\R^n)$ by $T^{(x_0,r)}$ is given by 
  \[
    T^{(x_0,r)}_\# \mu(A)=\mu (x_0+rA)\,.
    \]
The fact that we are going to use is that for $\L^n$ almost every $x_0\in\R^n$,
there exists a sequence $r_j\downarrow 0$ such that
\[
  T^{(x_0,r_j)}_\# \mu\wsto \frac{\d\mu}{\d\L^n}(x_0)\L^n\,.
  \]
  This follows e.g.~from Theorem 2.44 in \cite{MR1857292} in combination with
  the Radon-Nikod\'ym differentiation theorem.
\subsection{Quasiconvexification of $\tilde h_\lambda$}
One of the main ingredients for the derivation of our convergence result are the
known relaxations of the functionals $\G_{\lambda,g}$ for $\lambda<\infty$. A
proof of the 
following statement can be found in  \cite{allaire1993optimal} (see also \cite{MR820342,MR1859696,allaire1997shape}).

\begin{theorem}
  The $\div$-quasiconvexification of $\tilde h_\lambda$, $h_\lambda=Q\tilde h_\lambda$, is given by the following
  formulas:
  \begin{itemize}
  \item If $n=2$ then
    \[
       h_\lambda(\tau)= \begin{cases} \lambda^{-1/2}|\tau|^2+\lambda^{1/2} &\text{ if
        }\rho^{(2)}(\tau)\geq
        \sqrt{\lambda}\\
        2 
        \left(\rho^{(2)}(\tau)-\lambda^{-1/2}|\det\tau|\right) &\text{
          else.}\end{cases}
    \]
    \item If $n=3$, then
    \[
       h_\lambda(\tau)= \begin{cases} \lambda^{-1/2}|\tau|^2+\lambda^{1/2} &\text{ if
        }\rho^{(3)}(\tau)\geq
        \sqrt{\lambda}\\
        2  \left(\sqrt{ (|\tau_1|+|\tau_2|)^2+\tau_3^2}-\lambda^{-1/2}|\tau_1\tau_2|\right) & \text{ if }
        \rho^{(3)}(\tau)\leq\sqrt{\lambda} \text{ and }|\tau_1|+|\tau_2|\leq
        |\tau_3|\\
        h_\lambda^*(\tau)&\text{
          else,}\end{cases}
    \]
    where
    \[
      h_\lambda^*(\tau)=\sqrt{2} (|\tau_1|+|\tau_2|+|\tau_3|)+\lambda^{-1/2}\left(\frac12
      |\tau|^2-(|\tau_1\tau_2|+|\tau_1\tau_3|+|\tau_2\tau_3|)\right)\,.
      \]
  \end{itemize}
\end{theorem}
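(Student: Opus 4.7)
The plan is to prove both inequalities separately: an upper bound for $Q\tilde h_\lambda$ by explicit construction of divergence-free oscillations (sequential laminates) realizing the formula, and a matching lower bound by the translation method of Murat--Tartar.

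For the upper bound I would diagonalize $\tau = \sum_i \tau_i\, e_i\otimes e_i$ in its eigenbasis and construct, for each $i$, a one-dimensional chequerboard $\alpha_i\colon \R\to\R$ depending only on $x_j$ with $j\neq i$, satisfying $\alpha_i = \theta_i^{-1}$ on a set of periodic volume fraction $\theta_i$ and $\alpha_i = 0$ elsewhere. The sum $\varphi(x) = \sum_i \tau_i\,\alpha_i(x_j)\, e_i\otimes e_i$ is divergence-free because each rank-one term $e_i\otimes e_i$ is differentiated only in the transverse variable $x_j$, and its cell average is $\tau$. In the 2D case the set where $\varphi\neq 0$ has density $\theta_1+\theta_2-\theta_1\theta_2$, so
\[
\int_Q \tilde h_\lambda(\varphi)\,\d x \;=\; \frac{\tau_1^2/\theta_1+\tau_2^2/\theta_2}{\sqrt{\lambda}} + (\theta_1+\theta_2-\theta_1\theta_2)\sqrt{\lambda},
\]
and optimizing at $\theta_i = |\tau_i|/\sqrt{\lambda}$ (admissible precisely when $\rho^{(2)}(\tau)\leq\sqrt{\lambda}$) collapses this to $2\rho^{(2)}(\tau) - 2\lambda^{-1/2}|\det\tau|$. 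In 3D I would use the same triple lamination in the generic regime $\rho^{(3)}(\tau)\leq\sqrt\lambda$ with $|\tau_1|+|\tau_2|>|\tau_3|$, and replace it by a two-scale laminate that packages the two smaller eigenvalues into an ``effective'' rank-two tensor in the degenerate regime $|\tau_1|+|\tau_2|\leq|\tau_3|$, which is exactly the case where the formula changes to $\frac12\sqrt{(|\tau_1|+|\tau_2|)^2+\tau_3^2}$.

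For the lower bound I would rely on the fact that $\det$-like quadratic minors are divergence-quasiaffine for symmetric matrix fields: in two dimensions this is the Airy stress function representation (divergence-free symmetric fields are Hessians $\nabla^2 u$, so $\det$ is the Monge--Amp\`ere null Lagrangian), and in three dimensions it is the analogous Beltrami representation $\sigma = \mathrm{curl}\,\mathrm{curl}\,\Phi$, under which the $2\times 2$ principal minors of $\sigma$ are null Lagrangians. Writing the candidate formula as $\mathrm{(convex\ function\ of\ }\tau) + \mathrm{(null\ Lagrangian)}$ on each sign-regime of $\det\tau$ (resp.\ of the relevant minors in 3D) — using that $\rho^{(n)}$ is convex, being a supremum of linear forms $\sup\{Y\!:\!\tau : |Y|_{\mathrm{op}}\leq c_n\}$ in each branch — Jensen's inequality applied along a div-free test field yields the desired lower bound. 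The two regimes of the 2D formula meet $C^1$-smoothly on $\rho^{(2)}(\tau)=\sqrt\lambda$, which I would verify to conclude that $h_\lambda$ is indeed div-quasiconvex globally.

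The main obstacle is the 3D lower bound: the Beltrami representation gives a second-order ``potential'' rather than a scalar Airy function, so identifying the right null Lagrangian is delicate, and moreover the two non-trivial branches of the 3D formula must be patched together across $|\tau_1|+|\tau_2|=|\tau_3|$ while preserving both convexity modulo the null Lagrangian \emph{and} global quasiconvexity. A secondary technical point is verifying that the sequential laminate in the degenerate 3D regime actually saturates the formula $\tfrac12\sqrt{(|\tau_1|+|\tau_2|)^2+\tau_3^2}$; here I would exploit that in this regime the optimal microstructure degenerates to a rank-one laminate aligned with $e_3$ with an internal transverse mixture, mirroring the two-scale construction of Kohn--Strang.
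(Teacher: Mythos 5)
First, a point of comparison: the paper does not prove this theorem at all --- it is quoted from \cite{allaire1993optimal} (see also \cite{MR820342,MR1859696,allaire1997shape}) --- so there is no in-paper argument to measure you against; your two-sided strategy (laminate upper bound, translation-method lower bound) is indeed the strategy of those references. However, your execution of both halves contains genuine gaps. For the upper bound, the microstructure you describe --- independent one-dimensional chequerboards $\alpha_i(x_j)$ superposed additively --- is a ``product'' structure, not a sequential laminate, and it does not attain the stated formula. Plugging $\theta_i=|\tau_i|/\sqrt\lambda$ into your own expression gives
\[
\lambda^{-1/2}\sum_i\frac{\tau_i^2}{\theta_i}+\bigl(\theta_1+\theta_2-\theta_1\theta_2\bigr)\sqrt\lambda
=2\rho^{(2)}(\tau)-\lambda^{-1/2}|\det\tau|\,,
\]
which exceeds the target $2\rho^{(2)}(\tau)-2\lambda^{-1/2}|\det\tau|$ whenever $\det\tau\neq 0$; the claimed ``collapse'' is an arithmetic slip, and optimizing the $\theta_i$ within this ansatz does not close the gap (for $\tau=\tfrac{\sqrt\lambda}{4}\id$ the best the ansatz achieves is $\approx 0.93\sqrt\lambda$, versus the target $\tfrac78\sqrt\lambda$). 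One needs genuinely nested laminates at separated scales, where the inner lamination lives only inside the material layers of the outer one; the complementary energy of the optimal such structure at material fraction $\theta$ is $|\tau|^2+\tfrac{1-\theta}{\theta}(|\tau_1|+|\tau_2|)^2$, and optimizing $\theta=\rho^{(2)}(\tau)/\sqrt\lambda$ then produces the correct factor $2$ in front of $|\det\tau|$.

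For the lower bound, the 2D translation argument is essentially right, though you must fix a single convex $g$ and a signed constant $c$ and verify $\tilde h_\lambda(\sigma)\geq g(\sigma)+c\det\sigma$ for \emph{all} $\sigma$ (not regime by regime, since $|\det|$ itself is not quasiaffine). The 3D plan, however, rests on a false premise: the $2\times 2$ minors of $\sigma$ are \emph{not} $\div$-quasiaffine in three dimensions. A quadratic form is $\div$-quasiaffine precisely when it vanishes on the wave cone $\Lambda_\div=\{\det=0\}\subset\R^{3\times 3}_{\mathrm{sym}}$, which is the zero set of the irreducible cubic $\det$; no nonzero quadratic form can vanish there (concretely, every principal minor is nonzero somewhere on $\mathrm{Ker}\,\mathbf A^1(e_3)=\{\sigma:\sigma e_3=0\}$). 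The 3D bound must therefore be run with quadratic forms that are merely \emph{nonnegative} on $\Lambda_\div$, for which compensated compactness yields only the inequality $\int_Q q(\xi+\varphi)\,\d x\geq q(\xi)$, and identifying the right such forms and the convex remainder in each of the two nontrivial branches --- including the matching across $|\tau_1|+|\tau_2|=|\tau_3|$ --- is exactly the hard content of \cite{allaire1993optimal} that your proposal only gestures at.
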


Obviously we have the following pointwise convergences:
If $n=2$, then
\[
  \lim_{\lambda\to\infty}
   h_\lambda(\tau)=2\rho^{(2)}(\tau)=:h^{(2)}(\tau)
  \]
  and if $n=3$, then  
  \[
     \lim_{\lambda\to\infty}
     h_\lambda(\tau)=\sqrt{2}\rho^{(3)}(\tau)=:h^{(3)}(\tau)\,.
  \]
Whenever we make statements that are true for $n\in\{2,3\}$, we also write
$h\equiv \hh{n}$.

We consider the divergence operator on symmetric matrices (which may
be identified with $\R^{n(n+1)/2}$). We have already noted that the wavecone  is given by 
\[
  \Lambda_{\div}=\{A\in \R^{n\times n}: A=A^T, \mathrm{Rk}\, A\leq n-1\}\,.
\]

This readily implies that the restriction of $\hh{n}$ to $\Lambda_{\div}$
(which is obtained by setting $\tau_1=0$) is
given by, for $n=2$, 
\[
  \hh{2}|_{\Lambda_\div}(\tau)=2|\tau_2|\,,
  \]
and for $n=3$ by
  \[
    \hh{3}|_{\Lambda_\div}(\tau)= 2  \sqrt{
      \tau_2^2+\tau_3^2} \,.
  \]

Of course, the right hand side of the last two equations is defined on all of
$\Rsym$. We denote it by $H^{(n)}$,
\[
  H^{(2)}(\tau)=2|\tau_2|,\qquad H^{(3)}(\tau)=2  \sqrt{
    \tau_2^2+\tau_3^2} \,.
\]
Again we
write $H\equiv H^{(n)}$ whenever statements hold simultaneously for $n=2$ and $n=3$.

\begin{lemma}
  \label{lem:Hestim}The function $H$ is convex, and for every $\lambda>0$, we have that
\[
  H(\tau)\leq h_\lambda(\tau)\,.
\]
      
    \end{lemma}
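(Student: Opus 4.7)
The plan is to prove both claims together via a single Jensen-inequality argument, reducing the nontrivial bound $H\le h_\lambda$ to the much simpler pointwise inequality $H\le\tilde h_\lambda$.

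The first step is to verify convexity of $H$. For $n=2$, I would observe that $|\tau_2|$ is exactly the spectral radius of the symmetric matrix $\tau$, so $H^{(2)}(\tau)=2\sup_{|v|=1}|v\cdot\tau v|$ realizes $H^{(2)}$ as a supremum of absolute values of linear functionals of $\tau$, hence as a convex function. For $n=3$ I would use the variational identity $\sqrt{\tau_2^2+\tau_3^2}=\sup_V\|\tau V\|_F$, where $V$ ranges over $3\times 2$ matrices with orthonormal columns. This identity follows from $\|\tau V\|_F^2=\Tr(V^T\tau^2V)$, which is linear in the rank-two orthogonal projection $VV^T$ and is therefore maximized by a $V$ whose columns span the two eigendirections of $\tau$ with eigenvalues largest in absolute value. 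Since each map $\tau\mapsto\|\tau V\|_F$ is a seminorm, $H^{(3)}$ is a supremum of convex functions, and hence is convex.

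The second step is the pointwise inequality $H(\tau)\le\tilde h_\lambda(\tau)$. At $\tau=0$ both sides vanish. For $\tau\ne 0$, dropping the $\tau_1$-term inside the square root (or, for $n=2$, bounding $|\tau_2|\le|\tau|$) gives $H(\tau)\le 2|\tau|$, and the AM--GM estimate $|\tau|^2/\sqrt\lambda+\sqrt\lambda\ge 2|\tau|$ yields $H(\tau)\le 2|\tau|\le\tilde h_\lambda(\tau)$.

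With these two ingredients in hand, the proof concludes quickly. For every admissible test field $\varphi\in C^\infty_{\mathrm{per}}(Q;\Rsym)\cap\mathrm{Ker}\,\div$ with $\int_Q\varphi\,dx=0$, convexity of $H$ and Jensen's inequality give $H(\tau)\le\int_Q H(\tau+\varphi(x))\,dx\le\int_Q\tilde h_\lambda(\tau+\varphi(x))\,dx$, and infimizing over $\varphi$ in the definition of $Q\tilde h_\lambda$ yields $H(\tau)\le h_\lambda(\tau)$. The only nontrivial ingredient is the convexity of $H^{(3)}$; the rest is essentially the general principle that any convex function dominated by $f$ lies below the quasiconvex envelope $Qf$.
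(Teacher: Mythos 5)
Your proof is correct, but it takes a genuinely different route from the paper's. The paper proves $H\le h_\lambda$ by direct comparison with the explicit Allaire--Kohn formulas for $h_\lambda$: for $n=2$ the inequality is declared obvious, and for $n=3$ it is checked on the slice $\tau_1=0$ and then propagated by verifying $\partial_{\tau_1}h_\lambda\ge 0$ a.e., which means working through the three cases of the formula (including $h_\lambda^*$); convexity of $H$ is simply asserted as straightforward. You instead establish only the much weaker pointwise bound $H(\tau)\le 2|\tau|\le \tilde h_\lambda(\tau)$ and then invoke the general principle that a convex minorant of $\tilde h_\lambda$ is a minorant of its $\div$-quasiconvexification $Q\tilde h_\lambda=h_\lambda$, via Jensen's inequality (which, as you note implicitly, uses only that the test fields have zero mean, not that they are divergence-free). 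This bypasses the explicit formulas for $h_\lambda$ entirely --- the only external input is the identity $h_\lambda=Q\tilde h_\lambda$ --- at the price of requiring an actual proof of convexity of $H$, which you supply correctly through the variational representations $H^{(2)}(\tau)=2\sup_{|v|=1}|v\cdot\tau v|$ and $H^{(3)}(\tau)=2\sup_V\|\tau V\|_F$ over $3\times 2$ matrices $V$ with orthonormal columns (Ky Fan's maximum principle). Your argument is cleaner and more robust to the precise form of the relaxation; the paper's is an elementary calculus check once the formulas are accepted. Both are complete.
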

    \begin{proof}
The convexity is straightforward from the formulas above. Concerning the
inequality, for $n=2$, this is obvious. For $n=3$, we insert $\tau_1=0$, verify the inequality, and then verify by a direct
      computation that
      $\partial_{\tau_1} h_\lambda(\tau)\geq 0$ (for a.e.~$\tau$).
    \end{proof}

\section{Proof of the lower bound}



By the Radon-Nikod\'ym theorem, we have the decomposition of the limit measure
$\mu\in\mathcal M(\Omega;\Rsym)$ into one part that is singular with respect to the Lebesgue
measure, and the regular part,
\[
  \mu=\frac{\d\mu}{\d\L^n}\L^n +\mu^s\,.
  \]
Using the blow-up technique, we will prove the lower bound at regular and at
singular points separately.

\subsection*{Lower bound at singular points}
\begin{proposition}
  \label{prop:lowersing}
      Let $u_j\in L^2(\Omega;\Rsym)$, $\mu\in \mathcal M(\overline\Omega;\Rsym)$ with   $u_j\L^n\ecke\Omega\wsto \mu$ and  
      $\div \mu\in \M(\overline\Omega;\R^n)$. For $|\mu^s|$ almost every $x_0$,
      with $\frac{\d\mu}{\d|\mu|}(x_0)=\xi$, we have that
      \[
        \lim_{r\to 0}\liminf_{\lambda\to\infty} \frac{1}{|\mu|(Q(x_0,r))}\int_{Q(x_0,r)} h_\lambda(u_\lambda)\d x\geq h(\xi)\,.
      \]
    \end{proposition}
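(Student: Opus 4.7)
The plan is to exploit three structural facts in sequence: (i) by Theorem~\ref{thm:DPR} applied to $\mu\in\mathcal M(\overline\Omega;\Rsym)$ with $\div\mu\in\mathcal M(\overline\Omega;\R^n)$, for $|\mu^s|$-a.e.\ $x_0$ the polar $\xi:=\frac{\d\mu}{\d|\mu|}(x_0)$ belongs to the wave cone $\Lambda_\div$; (ii) on $\Lambda_\div$ the identification $h|_{\Lambda_\div}=H$ recorded just before Lemma~\ref{lem:Hestim} gives $h(\xi)=H(\xi)$; (iii) by Lemma~\ref{lem:Hestim} itself, $h_\lambda(\tau)\geq H(\tau)$ for every $\tau\in\Rsym$ and $\lambda>0$, and $H$ is convex and positively $1$-homogeneous. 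So it is enough to prove the proposition with $H(\xi)$ on the right, and one never needs to open the quasiconvexification formulas for $h_\lambda$ beyond the pointwise comparison in (iii).

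Fix such a good $x_0$ -- one may in addition arrange the Besicovitch differentiation property of $\mu$ with respect to $|\mu|$ at $x_0$, which holds $|\mu|$-a.e., hence $|\mu^s|$-a.e. -- and restrict $r>0$ to the cocountable set on which $|\mu|(\partial Q(x_0,r))=0$. Applying (iii), then Jensen's inequality for the convex $H$ on the probability space $(Q(x_0,r),\L^n/\L^n(Q(x_0,r)))$, then positive $1$-homogeneity,
\[
\int_{Q(x_0,r)} h_\lambda(u_\lambda)\,\d x \;\geq\; \int_{Q(x_0,r)} H(u_\lambda)\,\d x \;\geq\; \L^n(Q(x_0,r))\,H\!\left(\frac{1}{\L^n(Q(x_0,r))}\int_{Q(x_0,r)} u_\lambda\,\d x\right) \;=\; H\!\left(\int_{Q(x_0,r)} u_\lambda\,\d x\right).
\]
Dividing by $|\mu|(Q(x_0,r))>0$ and using $1$-homogeneity once more,
\[
\frac{1}{|\mu|(Q(x_0,r))}\int_{Q(x_0,r)} h_\lambda(u_\lambda)\,\d x \;\geq\; H\!\left(\frac{1}{|\mu|(Q(x_0,r))}\int_{Q(x_0,r)} u_\lambda\,\d x\right).
\]

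To finish, pass to the two limits. Because $Q(x_0,r)$ is a continuity set of $\mu$, the weak-$*$ convergence $u_\lambda\L^n\ecke\Omega\wsto\mu$ gives $\int_{Q(x_0,r)} u_\lambda\,\d x\to\mu(Q(x_0,r))$ as $\lambda\to\infty$, so by continuity of $H:\Rsym\to\R$,
\[
\liminf_{\lambda\to\infty} \frac{1}{|\mu|(Q(x_0,r))}\int_{Q(x_0,r)} h_\lambda(u_\lambda)\,\d x \;\geq\; H\!\left(\frac{\mu(Q(x_0,r))}{|\mu|(Q(x_0,r))}\right).
\]
Letting $r\downarrow 0$ along continuity radii, the Besicovitch differentiation theorem yields $\mu(Q(x_0,r))/|\mu|(Q(x_0,r))\to\xi$, and continuity of $H$ together with $H(\xi)=h(\xi)$ closes the estimate.

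The argument is essentially ``Jensen plus weak-$*$ convergence on continuity sets plus Besicovitch differentiation''; the real content lies in the three structural inputs (i)--(iii), all of which are already granted. Without the De Philippis--Rindler wave-cone restriction the polar $\xi$ need not be rank-deficient and the cheap identification $h(\xi)=H(\xi)$ would collapse; without the convexity and $1$-homogeneity of $H$ the Jensen step would fail. I therefore expect the more elaborate machinery of generalized Young measures and the $\A$-free projection operator advertised in the introduction to enter not here but in the lower bound at \emph{regular} points, where a single cube-averaged matrix is no longer sufficient: there one must resolve the oscillations of $u_\lambda$ in an $\A$-free fashion (subtracting its projection onto $\A$-free functions, with $L^2$ error controlled via Lemma~\ref{lem:Aproj} by $\|\div u_\lambda\|_{W^{-1,2}}$, which is precisely where the hypothesis $\lambda^{-1/4}\|g_\lambda\|_{W^{-1,2}}\to 0$ becomes essential) before invoking the $\div$-quasiconvexity built into $h_\lambda$.
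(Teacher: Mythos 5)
Your argument is correct in substance but follows a genuinely different, more elementary route than the paper. The paper passes to a generalized Young measure $\nu$ generated by $u_\lambda\L^n$ (Lemma \ref{lem:Youngcomp}), bounds $h_\lambda\geq H$ via Lemma \ref{lem:Hestim}, represents the limit of $\int H(u_\lambda)\,\d x$ through $\nu$, differentiates with respect to $\lambda_\nu$, and only then applies the Jensen inequality \eqref{eq:10} on the fibre $\nu_{x_0}^\infty$ before invoking Theorem \ref{thm:DPR}. You replace the entire Young-measure layer by a single application of Jensen's inequality on the cube, combined with positive one-homogeneity of $H$ and Besicovitch differentiation of $\mu$ with respect to $|\mu|$; the structural inputs (the pointwise bound $h_\lambda\geq H$, convexity and one-homogeneity of $H$, and the wave-cone identification $h(\xi)=H(\xi)$ via De Philippis--Rindler) are the same in both proofs. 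Your version is shorter and self-contained; the paper's fits into the Young-measure framework it has already set up.

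One step needs repair. From $u_\lambda\L^n\ecke\Omega\wsto\mu$ and $|\mu|(\partial Q(x_0,r))=0$ alone you cannot conclude $\int_{Q(x_0,r)}u_\lambda\,\d x\to\mu(Q(x_0,r))$: the indicator of the cube is not an admissible test function, and mass of $|u_\lambda|\L^n$ may concentrate on $\partial Q(x_0,r)$ with cancellation, so the relevant continuity condition is $\sigma(\partial Q(x_0,r))=0$ for a weak-$*$ limit $\sigma$ of the total variations $|u_\lambda|\L^n$, not for $|\mu|$. (The paper is careful on exactly this point: it selects radii with $\lambda_\nu(\partial Q(x_0,r_i))=0$.) The fix is cheap: either choose the radii to be null for a subsequential weak-$*$ limit of $|u_\lambda|\L^n$ (available since $\sup_\lambda\|u_\lambda\|_{L^1}<\infty$ by uniform boundedness), or avoid the issue altogether by testing with a continuous cutoff $0\leq\varphi\leq\chi_{Q(x_0,r)}$: since $H\geq 0$ is convex and positively one-homogeneous,
\[
\int_{Q(x_0,r)}H(u_\lambda)\,\d x\;\geq\;\int\varphi\, H(u_\lambda)\,\d x\;\geq\; H\Bigl(\int\varphi\, u_\lambda\,\d x\Bigr)\,,
\]
the argument of $H$ on the right converges by weak-$*$ convergence because $\varphi$ is continuous with compact support, and letting $\varphi\uparrow\chi_{Q(x_0,r)}$ recovers $H(\mu(Q(x_0,r)))$ whenever $|\mu|(\partial Q(x_0,r))=0$. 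With this modification your proof is complete.
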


    \begin{proof}
By Lemma \ref{lem:Youngcomp}      we have -- possibly after passing to a subsequence -- that $u_\lambda\L^n$ generates a
Young measure $\nu$, $u_\lambda\L^n\stackrel{\mathbf Y}{\to} \nu$. 
      Now by Lemma \ref{lem:Hestim}, we have for $\lambda_\nu$ almost every $x_0$, 

              \begin{equation}\label{eq:18}
        \begin{split}
        \lim_{r\to 0}\liminf_{\lambda\to\infty} &\frac{1}{|\mu|(Q(x_0,r))}\int_{Q(x_0,r)} h_\lambda(u_\lambda)\d x\\
        &\geq \lim_{r\to 0}\liminf_{\lambda\to\infty} \frac{1}{\lambda_\nu(Q(x_0,r))}\int_{Q(x_0,r)}  H(u_\lambda)\d x\\
        &= \lim_{r\to 0} \frac{1}{\lambda_\nu(Q(x_0,r))}\left(\int_{Q(x_0,r)} \langle H,\nu_x\rangle\d x+\int_{Q(x_0,r)} \langle H,\nu_x^\infty\rangle\d\lambda_\nu\right)\\
        &=\langle H, \nu_{x_0}^\infty\rangle 
      \end{split}
    \end{equation}

Here the limit $\lim_{r\to 0}$ has to be understood as the choice of a sequence
$(r_i)_{i\in\N}$, $r_i\downarrow 0$, such that $\lambda_\nu(\partial
Q(x_0,r_i))=0$ for all $i\in\N$, in order to justify the penultimate equality in
\eqref{eq:18}.
    In the last equality of \eqref{eq:18}, we have used that
    \[
      \begin{split}
      \frac{\d(\langle H,\nu_x^\infty\rangle \lambda_\nu)}{\d \lambda_\nu}(x_0)&=\langle H, \nu_{x_0}^\infty\rangle \text{ for }\lambda_\nu \text{ a.e. }x_0\\
      \frac{\d(\langle H,\nu_x\rangle \L^n)}{\d \lambda_\nu}(x_0)&=0 \text{ for }\lambda_\nu \text{ a.e. }x_0
    \end{split}
    \]
Using equation \eqref{eq:10} and the convexity of $H$,  we obtain
\[
  \begin{split}
  \lim_{r\to 0}\liminf_{\lambda\to\infty}
  \frac{1}{|\mu|(Q(x_0,r))}\int_{Q(x_0,r)} h_\lambda(u_\lambda)\d x&\geq \langle
  H,\nu_{x_0}^\infty\rangle \\
  &\geq H(\langle\id,\nu_{x_0}^\infty\rangle)\\
  &= H(\xi)\\
  &=h(\xi)\,.
\end{split}
  \]
  In the last equality, we have used the fact that $\xi\in \Lambda_{\div}$ for
  $\lambda_\nu$ almost every $x_0$ by
  Theorem \ref{thm:DPR}.
  \end{proof}
    

    \bigskip

    \subsection*{Lower bound for regular points}
    
    Our proof can be viewed as an adaptation of the proof of   Lemma 2.15 in \cite{arroyo2017lower}\red{, which itself is a variation of the proof of Proposition 3.1 in \cite{fonseca2004quasiconvexity}}. 
Even
though we will only need the case $\mathcal A=\div$, we will prove the lower
bound at regular points in a slightly more general setting. Namely, let
$\mathcal A$ be a first order linear partial differential operator.
Let  $\mathcal P\equiv \mathcal P_{\mathcal A}$ denote the projection operator onto
the (mean-free) $\mathcal A$-free functions from Lemma \ref{lem:Aproj}.

In the following, let $p>1$, $q>0$.  Furthermore, let  $f_\lambda:\R^m\to\R$ be $\mathcal A$-quasiconvex and locally Lipschitz with the estimate
\begin{equation}
  \label{eq:2}
        |\nabla f_\lambda(A)|\lesssim 1+\frac{|A|^{p-1}}{\lambda^q}\quad \text{ for a.e. }A\,.
      \end{equation}
This assumption translates into the estimate
\begin{equation}\label{eq:5}
        |f_\lambda(A)-f_\lambda(B)|\lesssim |A-B|\left(1+\frac{|A|^{p-1}+|B|^{p-1}}{\lambda^q}\right)\,.
          \end{equation}
          We write
          \[
            f(\xi)=\liminf_{\lambda\to\infty} f_\lambda(\xi)\,.
          \]

          One further property that we are going to assume (and that is valid in
          the case $f_\lambda=h_\lambda$ that we will be interested in later) is
            \begin{equation}\label{eq:8}
            f_\lambda(sA)\leq C s f_\lambda(A)
          \end{equation}
          for $s\leq 1$.
          
          \begin{proposition}
\label{prop:lowerreg}            Let $\mathcal A$ be a first order linear differential operator satisfying the constant rank condition,
            $\mu_\lambda$ a sequence in  $\mathcal M(\overline Q;\R^m)$ with
            $|\mu_\lambda|\ll \L^n$, $\frac{\d\mu_{\lambda}}{\d\L^n}\in
            L^p(Q;\R^m)$ and $\xi\in\R^m$, $\mu:=\xi\L^n$ such that
            \[
              \begin{split}
                \mu_\lambda-\mu&\wsto 0\quad \text{ in }\mathcal M(\overline Q;\R^m)\\
                \A(\mu_\lambda-\mu)&\wsto 0\quad \text{ in }\mathcal M(\overline
                Q;\R^m)\\
                \lambda^{-q/p}\A(\mu_\lambda-\mu)&\to 0 \quad \text{ in }
                W^{-1,p}( Q;\R^m)\\
                \lambda^{-q/p}\frac{\d\mu_\lambda}{\d\L^n}& \quad\text{ is bounded in } L^p(Q;\R^m)                \,.
              \end{split}
              \]
              Then
              \[
                f(\xi)\leq \liminf_{\lambda\to\infty}\int_Q f_\lambda\left(\frac{\d\mu_\lambda}{\d\L^n}\right)\d x\,.
                \]
\end{proposition}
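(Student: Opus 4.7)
My plan is to adapt the $\A$-quasiconvex blow-up argument of Fonseca--M\"uller, in the form used in Lemma 2.15 of \cite{arroyo2017lower}, to the present $\lambda$-dependent setting. The idea is to construct, from $u_\lambda := \d\mu_\lambda/\d\L^n$, a family of $Q$-periodic, $\A$-free competitors with mean close to $\xi$, to which $\A$-quasiconvexity of $f_\lambda$ can be applied and the resulting inequality compared to $\int_Q f_\lambda(u_\lambda)\,\d x$ via the growth estimate \eqref{eq:5}.

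Concretely, I would pass to a subsequence realizing $\liminf_\lambda \int_Q f_\lambda(u_\lambda)\,\d x$ and set $\bar u_\lambda := \int_Q u_\lambda\,\d x$; the weak-$*$ convergence $\mu_\lambda \wsto \xi\L^n$ on $\overline Q$ forces $\bar u_\lambda \to \xi$. Because Lemma \ref{lem:Aproj} is formulated for periodic functions, I would introduce a cut-off $\eta_\delta \in C_c^\infty(Q)$ with $\eta_\delta \equiv 1$ on $(1-\delta)Q$ and $|\nabla \eta_\delta| \lesssim \delta^{-1}$, form $\psi_\lambda^\delta := \eta_\delta(u_\lambda - \bar u_\lambda) - c_\lambda^\delta$ (with $c_\lambda^\delta$ chosen so that $\int_Q \psi_\lambda^\delta\,\d x = 0$), and set $\tilde\psi_\lambda^\delta := \mathcal{P}_{\A}\psi_\lambda^\delta$. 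The identity $\A\psi_\lambda^\delta = \eta_\delta \A(\mu_\lambda - \mu) + [\A, \eta_\delta](u_\lambda - \bar u_\lambda)$, in which the commutator is a zeroth-order operator since $\A$ is first-order, together with Lemma \ref{lem:Aproj} yields
\[
  \|\psi_\lambda^\delta - \tilde\psi_\lambda^\delta\|_{L^p(Q)} \lesssim \|\A(\mu_\lambda - \mu)\|_{W^{-1,p}(Q)} + C_\delta \|u_\lambda - \bar u_\lambda\|_{L^p(Q)},
\]
and the two scaling hypotheses give $\lambda^{-q/p}\|\psi_\lambda^\delta - \tilde\psi_\lambda^\delta\|_{L^p} \to 0$ for each fixed $\delta$. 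The $\A$-quasiconvexity of $f_\lambda$ applied at the point $\bar u_\lambda + c_\lambda^\delta$ with competitor $\tilde\psi_\lambda^\delta$ gives
\[
  f_\lambda(\bar u_\lambda + c_\lambda^\delta) \leq \int_Q f_\lambda(\bar u_\lambda + c_\lambda^\delta + \tilde\psi_\lambda^\delta)\,\d x.
\]
On the interior $(1-\delta)Q$ one has $\bar u_\lambda + c_\lambda^\delta + \tilde\psi_\lambda^\delta - u_\lambda = \tilde\psi_\lambda^\delta - \psi_\lambda^\delta$, so \eqref{eq:5} and H\"older's inequality control the error by $\lambda^{-q/p}\|\tilde\psi_\lambda^\delta - \psi_\lambda^\delta\|_{L^p}\bigl(1 + (\lambda^{-q/p}\|u_\lambda\|_{L^p})^{p-1}\bigr) = o(1)$; on the annulus $Q \setminus (1-\delta)Q$ the homogeneity-type bound \eqref{eq:8} together with the scaled $L^p$ bound absorbs the contribution into an $O(\delta)$ term. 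Taking $\liminf_\lambda$, using \eqref{eq:5} once more to replace $f_\lambda(\bar u_\lambda + c_\lambda^\delta)$ by $f_\lambda(\xi) + o(1)$, and then sending $\delta \to 0$ produces $f(\xi) \leq \liminf_\lambda \int_Q f_\lambda(u_\lambda)\,\d x$.

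The principal obstacle I anticipate is the interplay between periodization and scaling. Since $u_\lambda$ is controlled only through the scaled bound $\lambda^{-q/p}\|u_\lambda\|_{L^p} = O(1)$ and has no prescribed boundary behavior, a naive periodic extension would produce singular contributions of $\A u_\lambda$ on $\partial Q$, forcing the introduction of the cut-off. The cut-off in turn introduces both a commutator term and an annular integral, and ensuring that both errors are $o(1)$ after first sending $\lambda \to \infty$ and then $\delta \to 0$ requires precisely the matched scalings $\lambda^{-q/p}$ of the two hypotheses on $u_\lambda$ and $\A(\mu_\lambda - \mu)$.
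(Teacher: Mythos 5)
Your overall architecture matches the paper's: cut off $u_\lambda-\bar u_\lambda$, subtract the mean, project onto mean-free $\A$-free periodic fields via Lemma \ref{lem:Aproj}, apply $\A$-quasiconvexity, and control the replacement errors through \eqref{eq:5}. The gap is in the annulus term. You assert that the contribution of $Q\setminus(1-\delta)Q$ is absorbed "into an $O(\delta)$ term" by \eqref{eq:8} and the scaled $L^p$ bound. After applying \eqref{eq:8} and the growth estimate, that contribution is controlled by
\[
\L^n\bigl(Q\setminus(1-\delta)Q\bigr)+\int_{Q\setminus(1-\delta)Q}|u_\lambda-\bar u_\lambda|\,\d x+\lambda^{-q}\int_{Q\setminus(1-\delta)Q}|u_\lambda-\bar u_\lambda|^p\,\d x\,,
\]
and only the first summand is $O(\delta)$. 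The hypotheses provide no equi-integrability of $|u_\lambda|\L^n$ or of $\lambda^{-q}|u_\lambda|^p\L^n$; weak-$*$ convergence $\mu_\lambda\wsto\xi\L^n$ is perfectly compatible with a fixed amount of $|\mu_\lambda|$-mass concentrating at a point of $\partial Q$ (indeed $\partial Q$ is exactly where concentration is expected after blow-up), in which case $\limsup_{\lambda}\int_{Q\setminus(1-\delta)Q}|u_\lambda|\,\d x$ stays bounded away from zero for \emph{every} $\delta>0$, and your order of limits ($\lambda\to\infty$ first, then $\delta\to 0$) does not rescue the argument. This is precisely what the slicing device in the paper's proof is for: one uses $L$ nested cut-offs $\varphi_1,\dots,\varphi_L$ whose gradients live in disjoint annuli $Q_{k+1}\setminus Q_k$, sums the resulting $L$ inequalities, notes that the total annulus contribution is $\lesssim \|w_{\lambda,k}\|_{L^1(Q)}+\lambda^{-q}\|w_{\lambda,k}\|_{L^p(Q)}^p\lesssim 1$ uniformly in $L$ because the annuli are disjoint, and divides by $L$ to turn an $O(1)$ error into an $O(1/L)$ one before sending $L\to\infty$ and then $\delta\to 0$.

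A secondary, fixable imprecision: you bound the commutator contribution to the projection error by $C_\delta\|u_\lambda-\bar u_\lambda\|_{L^p(Q)}$ and then claim that the scaling hypotheses force $\lambda^{-q/p}\|\psi^\delta_\lambda-\tilde\psi^\delta_\lambda\|_{L^p}\to 0$. They do not: $\lambda^{-q/p}\|u_\lambda\|_{L^p}$ is only assumed \emph{bounded}, so your estimate yields boundedness, not smallness. The commutator $[\A,\eta_\delta](u_\lambda-\bar u_\lambda)$ must be measured in $W^{-1,p}$ rather than estimated crudely through $L^p$; for instance one can use $L^r(Q)\hookrightarrow W^{-1,p}(Q)$ for suitable $r<p$ and interpolate the $L^r$ norm between the bounded $L^1$ norm and the $\lambda^{q/p}$-growing $L^p$ norm to gain a vanishing factor. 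The analogous step in the paper is the decomposition of $\A\bar w_{\lambda,k}$ followed by the two estimates in \eqref{eq:6}, where the $L^1$-type error is handled separately through the compact embedding $\mathcal M(Q)\subset W^{-1,\bar p}(Q)$ with $\bar p<n/(n-1)$.
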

\begin{proof}
After taking subsequences we may assume that the right hand side is  a limit,
and that it is finite. Let $\eta\in C^\infty_c(\R^n)$ with $\int_{\R^n}\eta\,\d x=1$
and $\eta_\e\:=\e^{-n}\eta(\cdot/\e)$.
For $k\in\N$, choose $\e(\lambda,k)\downarrow 0$ as $\lambda\to \infty$. Now set
for $x\in\R^n$,
\[
  \begin{split}
  w_{\lambda,k}(x)&:=\int_{\R^n}\eta_{\e(\lambda,k)}(x-\cdot)\d (\mu_\lambda-\mu)\\
  &=\left(\eta_{\e(\lambda,k)}*(\mu_\lambda-\mu)\right)(x)\,.
\end{split}
  \]
We have that
  \begin{equation}
  \begin{split}
    |w_{\lambda,k}\L^n-(\mu_\lambda-\mu)|(\overline Q)&\downarrow 0 \quad \text{ as }
    \lambda\to 0\,.
  \end{split}\label{eq:14}
  \end{equation}
Let $\delta\in(0,1)$, and set
\[
  Q_k:=\{x\in
  Q:\dist(x,\partial Q)>\frac\delta k\}\
  \]
and let $\varphi_k$ be associated test functions with $\varphi_k=1$ on $Q_k$ and
$\varphi_k=0$ on $Q\setminus Q_{k+1}$. We set
\[
\hat w_{\lambda,k}=\varphi_kw_{\lambda,k}\,.
\]
Now let
\begin{equation*}
  \begin{split}
    \bar w_{\lambda,k}&=\hat w_{\lambda,k}-\fint_Q \hat w_{\lambda,k}\d x\\
  \tilde w_{\lambda,k}&=\mathcal P_{\mathcal A} \bar w_{\lambda,k}\,.
\end{split}
\end{equation*}
Note that $\bar w_{\lambda,k}\in C^\infty_{\mathrm{per}}(Q;\R^m)$.
For every $\lambda,k$ we have by the $\mathcal A$-quasiconvexity of $f_\lambda$ that

    \begin{equation}\label{eq:13}
  f_\lambda(\xi)\leq \int_Q f_\lambda(\xi+\tilde w_{\lambda,k})\d x\,.
\end{equation}

Using \eqref{eq:5} and H\"older's inequality, we have
\begin{equation}
  \begin{split}
  \int_Q f_\lambda(\xi+\tilde w_{\lambda,k})\d x
  \leq &\int_Q f_\lambda(\xi+\bar w_{\lambda,k})\d x +C \|\mathcal P\bar w_{\lambda,k}-\bar w_{\lambda,k}\|_{L^1(Q)}\\
  &\quad +\frac{C}{\lambda^q}\left(\|\bar w_{\lambda,k}\|^{p-1}_{L^p(Q)}+\|\mathcal P\bar w_{\lambda,k}\|^{p-1}_{L^p(Q)}\right)\|\mathcal P\bar w_{\lambda,k}-\bar w_{\lambda,k}\|_{L^p(Q)}\,,
\end{split}\label{eq:12}
\end{equation}

We claim that for the error terms on the right hand side vanish in the limit $\lambda\to 0$. Indeed, we have for any $\bar p\in (1,\frac{n}{n-1})$, 
  \begin{equation}\label{eq:6}
  \begin{split}
    \limsup_{\lambda\to \infty} \|\mathcal P\bar w_{\lambda,k}-\bar w_{\lambda,k}\|_{L^1(Q)}&\stackrel{\text{H\"older}}{\lesssim} \limsup_{\lambda\to \infty} \|\mathcal P\bar w_{\lambda,k}-\bar w_{\lambda,k}\|_{L^{\bar p}(Q)}\\
   &\lesssim \limsup_{\lambda\to \infty}\|\mathcal A \bar w_{\lambda,k}\|_{W^{-1,\bar p}(Q)}\\
    \limsup_{\lambda\to\infty}\lambda^{-q/p}\|\mathcal P\bar w_{\lambda,k}-\bar w_{\lambda,k}\|_{L^p(Q)}&\lesssim
   \limsup_{\lambda\to\infty} \lambda^{-q/p}\|\mathcal A \bar w_{\lambda,k}\|_{W^{-1, p}(Q)}\,.
  \end{split}
\end{equation}
Furthermore
\[
  \begin{split}
  \mathcal A \bar w_{\lambda,k}&=\mathcal A  (\varphi_k \eta_\e* (\mu_\lambda-\mu))\\
  &=\varphi_k\eta_\e*\mathcal A   (\mu_\lambda-\mu)+\eta_\e*(\mu_\lambda-\mu)\mathcal A\varphi_k\,.
\end{split}
  \]
From this expansion and  $\mu_\lambda-\mu\wsto 0$, $\A(\mu_\lambda-\mu)\wsto 0$
we obtain  $\A \bar w_{\lambda,k}\wsto 0$ in $\mathcal M(Q;\R^m)$. By the
compact embedding $\mathcal M(Q;\R^m)\subset W^{-1,\bar p}(Q;\R^{m})$, we
have that $\|\mathcal A \bar w_{\lambda,k}\|_{W^{-1,\bar p}}\to 0$. 
From this and the assumption on the vanishing of $\lambda^{-p/q}\mathcal
A(\mu_\lambda-\mu)$ in $W^{-1,p}$ we obtain 
   our claim that the right hand sides in \eqref{eq:6} vanish too.

 Next we have, again using \eqref{eq:5},
   \begin{equation}\label{eq:16}
   \begin{split}
   \int_Q f_\lambda(\xi+\bar w_{\lambda,k})\d x&\leq
   \int_Q f_\lambda(\xi+ \hat w_{\lambda,k})\d x\\
   &\quad + C\left|\fint \hat
       w_{\lambda,k}\d x\right|+C\lambda^{-q}\left(\|\hat w_{\lambda,k}\|_{L^p}^{p-1}+\|\bar w_{\lambda,k}\|_{L^p}^{p-1}\right)\left|\fint \hat w_{\lambda,k}\d x\right|\,,
   \end{split}
 \end{equation}
By the boundedness assumption on $\lambda^{-q/p}\d(\mu_\lambda-\mu)/\d\L^n$ in $L^p$,  the error terms on the right hand side converge to 0 in the limit $\lambda\to 0$. Next,
   \begin{equation}\label{eq:17}
     \begin{split}
   \int_Q f_\lambda(\xi+ \hat w_{\lambda,k})\d x &=
   \int_{Q_k} f_\lambda\left(\frac{\d\mu_\lambda}{\d\L^n}\right)\d x +\int_{Q_{k+1}\setminus
     Q_k} f_\lambda(\xi+\varphi_k \eta_\e*(\mu_\lambda-\mu))\d
   x\\
   &\quad+\int_{Q\setminus Q_{k+1}} f_\lambda(\xi)\d x\,.
 \end{split}
 \end{equation}

 Combining \eqref{eq:13}, \eqref{eq:12}, \eqref{eq:16} and \eqref{eq:17},  and taking the limit $\lambda\to\infty$, we obtain
 \[
   f(\xi)\leq \liminf_{\lambda\to\infty} \left(\int_{Q_k} f_\lambda\left(\frac{\d\mu_\lambda}{\d\L^n}\right)+
     \int_{Q_{k+1}\setminus Q_{k}}
     f_\lambda(\xi+\varphi_kw_{\lambda,k})\d x+\int_{Q\setminus Q_{k+1}} f_\lambda(\xi)\d x\right)\,.
 \]
 Reordering and using $f_\lambda\geq 0$ yields
   \begin{equation}\label{eq:7}
 |Q_{k+1}| f(\xi)\leq \liminf_{\lambda\to\infty} \left(\int_{Q} f_\lambda\left(\frac{\d\mu_\lambda}{\d\L^n}\right)+
   \int_{Q_{k+1}\setminus Q_k} f_\lambda(\xi+\varphi_kw_{\lambda,k})\d x\right)\,.
\end{equation}
Using \eqref{eq:8}, we observe that
\[
  \int_{Q_{k+1}\setminus Q_k} f_\lambda(\xi+\varphi_kw_{\lambda,k})\d x
  \leq C\int_{Q_{k+1}\setminus Q_k} \left(f_\lambda(\xi)+\varphi_k f_\lambda(w_{\lambda,k})\right)\d x\,,
\]
Here the second error term on the right hand side, when summed over $k$, can be estimated as follows,
\[
  \begin{split}
    \limsup_{\lambda\to\infty}\sum_{k=1}^L\int_Q \varphi_kf_\lambda(w_{\lambda,k})\d x&\lesssim \limsup_{\lambda\to\infty}\int_Q f_\lambda(w_{\lambda,k})\d x\\
    &\lesssim\limsup_{\lambda\to\infty}\left(\|w_{\lambda,k}\|_{L^1}+\lambda^{-q}\|w_{\lambda,k}\|_{L^p}^p\right)\\
    &\lesssim C\,.
\end{split}
\]
 Summing \eqref{eq:7} from $k=1$ to $L$ and dividing by $L$ yields
 \[
   \begin{split}
   |Q_1| f(\xi)
   &\leq \liminf_{\lambda\to\infty} \left(\int_{Q} f_\lambda\left(\frac{\d\mu_\lambda}{\d\L^n}\right)\d x+CL^{-1}\sum_{k=1}^L  \int_{Q_{k+1}\setminus Q_k} f_\lambda(\xi+\varphi_k\eta_\e*(\mu_\lambda-\mu))\d x\right)\\
   &\leq \liminf_{\lambda\to\infty} \left(\int_{Q} f_\lambda\left(\frac{\d\mu_\lambda}{\d\L^n}\right)\d x+CL^{-1}  \right)\,.
 \end{split}
   \]
Taking first the limit $L\to\infty$ and  then 
 $\delta\to 0$ (i.e., $|Q_1|\to |Q|=1$) we obtain the claim of the proposition. 
\end{proof}

\red{
\begin{remark}
  \begin{itemize}
  \item[(i)] The blowup technique (in the context of lower semicontinuity of integral functionals) that we use here has been developed  by Fonseca and M\"uller, first for  Sobolev functions \cite{MR1177778}, then for $BV$ functions \cite{MR1218685}, then for $\mathcal A$-free $L^p$ functions \cite{MR1718306}. The paper
    \cite{fonseca2004quasiconvexity} discusses lower semicontinuity for $\mathcal A$-free functions  in the weak-* convergence of measures; it is proved there in particular  that
    \[
      \int_\Omega f\left(x,\frac{\d\mu}{\d\L^n}(x)\right)\d x\leq \liminf_{k\to\infty} \int_\Omega f(x,v_k(x))\d x
    \]
    for any sequence $v_k\in L^1$ that converges weakly-* in the sense of measures to some $\R^d$ valued Radon measure $\mu$, 
   where  $f:\Omega\times \R^d\to\R$ is $\A$-quasiconvex in the second argument with linear growth at infinity, see Theorem 1.4 in \cite{fonseca2004quasiconvexity}. (Some additional regularity is required of $f$, which we omit here for the sake of brevity.) For $f=f_\lambda$, our Proposition \ref{prop:lowerreg} is a direct consequence of this theorem.

 
\item[(ii)]  Note that we use the $\mathcal A$-quasiconvexity of $f_\lambda$, and not any
  convexity properties of the limit $f$ to show our claim.
\end{itemize}

\end{remark}
}

\begin{lemma}
  \label{lem:lowerreg}
  Let   $\mu_\lambda$ be a sequence in  $\mathcal M(\overline Q;\Rsym)$ with $|\mu_\lambda|\ll\L^n$, $\d\mu_\lambda/\d\L^n\in L^2(Q;\Rsym)$, and $\xi\in \Rsym$, $\mu=\xi\L^n$ such that
            \[
              \begin{split}
                \mu_\lambda-\mu&\wsto 0\quad \text{ in }\mathcal M(\overline Q;\Rsym)\\
                \div(\mu_\lambda-\mu)&\wsto 0\quad \text{ in }\mathcal
                M(\overline Q;\Rsym)\\
                \lambda^{-1/4}\div(\mu_\lambda-\mu)&\to 0 \quad \text{ in }
                W^{-1,2}(Q;\Rsym)\\
                \lambda^{-1/4}\frac{\d\mu_\lambda}{\d\L^n}& \quad \text{ is bounded in }L^2(Q;\Rsym)\,.
              \end{split}
              \]
              Then
              \[
                h(\mu)\leq \liminf_{\lambda\to\infty}\int_Q h_\lambda(\mu_\lambda)\d x\,.                \]
\end{lemma}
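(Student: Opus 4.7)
The plan is to apply Proposition~\ref{prop:lowerreg} directly, taking $\mathcal{A}=\div$ acting on $\Rsym$-valued measures (identifying $\Rsym$ with $\R^{n(n+1)/2}$), with $p=2$, $q=1/4$, and $f_\lambda=h_\lambda$. With these choices the four convergence and boundedness hypotheses on $\mu_\lambda$ in Lemma~\ref{lem:lowerreg} match those of Proposition~\ref{prop:lowerreg} verbatim, and that proposition's conclusion reads $h(\xi)\leq \liminf_{\lambda\to\infty}\int_Q h_\lambda(\d\mu_\lambda/\d\L^n)\,\d x$, which is the claim of the lemma (since $|Q|=1$, $\mu=\xi\L^n$, and $\mu_\lambda\ll\L^n$, so the left-hand side reduces to $h(\xi)$ by one-homogeneity of $h$).

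Thus the work reduces to verifying the four structural hypotheses on $f_\lambda=h_\lambda$ imposed in Proposition~\ref{prop:lowerreg}: (i) $\div$-quasiconvexity of $h_\lambda$ is built into the definition $h_\lambda=Q\tilde h_\lambda$, and the constant rank condition for $\div$ is recorded in Section~\ref{sec:mathcal-a-free}; (ii) the pointwise identity $\liminf_\lambda h_\lambda=h$ is exactly the convergence stated immediately after the quasiconvexification formulas for $h_\lambda$; (iii) the gradient bound \eqref{eq:2} reduces, for $p=2$ and $q=1/4$, to $|\nabla h_\lambda(A)|\lesssim 1+|A|/\lambda^{1/4}$, which can be read off the explicit piecewise formulas for $h_\lambda$: on each branch $h_\lambda$ decomposes into a positively one-homogeneous piece with bounded subgradient, plus remainders of the form $|A|^2/\sqrt\lambda$ or $|\tau_i\tau_j|/\sqrt\lambda$ whose gradients are of order $|A|/\sqrt\lambda\leq |A|/\lambda^{1/4}$ for $\lambda\geq 1$; (iv) the subhomogeneity \eqref{eq:8} follows by a case split across the threshold $\rho^{(n)}(A)=\sqrt\lambda$, combining the lower bounds $h_\lambda(A)\gtrsim \rho^{(n)}(A)$ in the small regime and $h_\lambda(A)\gtrsim |A|$ in the large regime with the straightforward upper bound $h_\lambda(sA)\lesssim s\,\rho^{(n)}(A)$ valid in both regimes.

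The main (rather modest) obstacle is the case analysis required for (iii) and (iv): the piecewise definition of $h_\lambda$ forces one to track the three branches in dimension three and to control the cross terms involving $|\det\tau|$ or $|\tau_i\tau_j|$, especially near the transition $\rho^{(n)}(A)=\sqrt\lambda$. Beyond this bookkeeping, no new conceptual ingredient is needed: the heavy lifting is done once Proposition~\ref{prop:lowerreg} is in hand, and the specific value $q=1/4$ is precisely what is produced by the fact that the quadratic remainders in $h_\lambda$ carry a prefactor $\lambda^{-1/2}$, so the rescaled sequence $\lambda^{-1/4}\d\mu_\lambda/\d\L^n$ is exactly the one that fits the $L^p$-boundedness and $W^{-1,p}$-smallness required in Proposition~\ref{prop:lowerreg}.
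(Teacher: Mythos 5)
Your overall strategy is exactly the paper's: the lemma is a direct application of Proposition~\ref{prop:lowerreg} with $\mathcal A=\div$, $f_\lambda=h_\lambda$, $p=2$, after checking that $h_\lambda$ satisfies \eqref{eq:2} and \eqref{eq:8}. However, you have a parameter mismatch: you set $q=1/4$, whereas the paper takes $q=1/2$, and $q=1/2$ is the value that actually makes the hypotheses line up. Proposition~\ref{prop:lowerreg} requires $\lambda^{-q/p}\div(\mu_\lambda-\mu)\to 0$ in $W^{-1,p}$ and $\lambda^{-q/p}\,\d\mu_\lambda/\d\L^n$ bounded in $L^p$. With $p=2$ and $q=1/4$ this asks for control of $\lambda^{-1/8}\div(\mu_\lambda-\mu)$ and $\lambda^{-1/8}\,\d\mu_\lambda/\d\L^n$, which is \emph{stronger} than the $\lambda^{-1/4}$ scaling assumed in the lemma and does not follow from it; so the claim that the hypotheses ``match verbatim'' fails for your choice of $q$. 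Taking $q=1/2$ gives $q/p=1/4$, which does match, and then the gradient bound \eqref{eq:2} reads $|\nabla h_\lambda(A)|\lesssim 1+|A|/\sqrt\lambda$, which is precisely the rate produced by the $\lambda^{-1/2}$ prefactors in the explicit formulas for $h_\lambda$; you appear to have conflated $q$ with $q/p$.

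A smaller point: your justification of \eqref{eq:8} via ``$h_\lambda(sA)\lesssim s\,\rho^{(n)}(A)$ in both regimes'' is not literally correct in the large regime, where $h_\lambda(sA)\sim s^2|A|^2/\sqrt\lambda+\sqrt\lambda$ can exceed $s\rho^{(n)}(A)$. The correct case split is: if $sA$ lands in the small regime, $h_\lambda(sA)\lesssim \rho^{(n)}(sA)=s\rho^{(n)}(A)\lesssim s\,h_\lambda(A)$ (using $h_\lambda\gtrsim\rho^{(n)}$ in the small regime and $h_\lambda(A)\gtrsim\rho^{(n)}(A)$ in the large regime as well, since $\lambda^{-1/2}|A|^2\gtrsim\rho^{(n)}(A)$ there); if both $A$ and $sA$ are in the large regime, then $s|A|\gtrsim\sqrt\lambda$ forces $\sqrt\lambda\lesssim s^2\lambda^{-1/2}|A|^2\leq s\lambda^{-1/2}|A|^2\leq s\,h_\lambda(A)$, and the quadratic term is handled by $s^2\leq s$. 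With these two corrections (take $q=1/2$, and tighten the case analysis for \eqref{eq:8}), your argument coincides with the paper's one-line proof.
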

\begin{proof}
  We apply Proposition \ref{prop:lowerreg} with $f_\lambda=h_\lambda$,
  $q=\frac12$ and $p=2$. The estimates \eqref{eq:2}, \eqref{eq:8} for
  $h_\lambda$ are easily verified by an explicit calculation.
\end{proof}

\begin{proof}[Proof of the lower bound in Theorem \ref{thm:main}] After choosing
  a suitable subsequence, we may assume that the $\liminf$ is a limit. We
  recall that $h_\lambda(\mu_\lambda)=Q\tilde h_\lambda(\mu_\lambda)\leq \tilde
  h_\lambda(\mu_\lambda)$. Hence $h_\lambda(\mu_\lambda)\L^n$ is a bounded
  sequence in $\mathcal M(\overline\Omega;\Rsym)$. 
After passing to a further subsequence, we have that $h_\lambda(\mu_\lambda)\L^n\wsto
\pi$ for some $\pi\in\mathcal M(\overline\Omega)$, with
\[
  \pi(\overline\Omega)=\lim_{\lambda}\int_\Omega h_\lambda(\mu_\lambda)\d x\leq
  \lim_{\lambda}\int_\Omega \tilde h_\lambda(\mu_\lambda)\d x\,.
\]
Since $\mu^s=\mu-\frac{\d\mu}{\d\L^n}\L^n$ is singular with respect
to $\L^n$, we have 
that 
\[
  \pi\geq \frac{\d\pi}{\d\L^n}\L^n +\frac{\d\pi}{\d|\mu^s|}{|\mu^s|}\,.
\]
By a well known representation of $W^{-1,p}$ (see
e.g. \cite{MR1014685} Theorem 4.3.3), we may write, in the sense of distributions,
\[
  \begin{split}
    g_\lambda=\alpha_\lambda+\div\beta_\lambda\\
  \end{split}
  \]
with $\alpha_\lambda\in L^{2}(\R^n;\R^n)$ and 
$\beta_\lambda\in L^{2}(\R^n;\Rsym)$.
  We recall that $\lambda^{-1/4}g_\lambda\to 0$ in $W^{-1,2}(\R^n)$ by
  assumption. This implies that $\alpha_\lambda$, $\beta_\lambda$ may be chosen
  such that
    \begin{equation}\label{eq:23}
    \lambda^{-1/4}\left(\|\alpha_\lambda\|_{L^2(\R^n)}+\|\beta_\lambda\|_{L^2(\R^n)}\right)\to
    0\,.
  \end{equation}
We have that for $\L^n$ almost every $x_0$, $T^{x_0,r}_\# \mu\wsto
\d\mu/\d\L^n(x_0)\L^n$, see Section \ref{sec:tangent-measures}. For fixed $r$,
we have that $T^{x_0,r}_\# \mu_\lambda\wsto T^{x_0,r}_\# \mu$. Hence we may
choose a sequence $r_\lambda\downarrow 0$ such that 
  \begin{equation}\label{eq:20}
\bar\mu_\lambda:=T^{x_0,r_\lambda}_\# \mu_\lambda\wsto \frac{\d\mu}{\d\L^n}(x_0)\L^n
\end{equation}
In the same way we may assume 
  \begin{equation}\label{eq:21}
\bar g_\lambda:=T^{x_0,r_\lambda}_\# g_\lambda\wsto \frac{\d g}{\d\L^n}(x_0)\L^n\,.
\end{equation}
By the Radon-Nikod\'ym Theorem,
we also have (again, for $\L^n$ almost every $x_0$)
\[
\frac{\d\pi}{\d\L^n}(x_0)=
                \lim_{\lambda\to\infty}\frac{1}{\L^n(Q(x_0,r_\lambda))}\int_{Q(x_0,r_\lambda)}h_\lambda\left(\frac{\d\mu_\lambda}{\d\L^n}\right)\d
                x\,.
\]

Now we verify for an $x_0$ that satisfies  the above relations  that the
conditions of Lemma \ref{lem:lowerreg} are fulfilled for the sequence
$\bar\mu_\lambda:=T^{(x_0,r_\lambda)}_\#\mu_\lambda$. The first condition of
that lemma is just  \eqref{eq:20}.
Furthermore,  note that
\[
  -\div \bar\mu_\lambda=r_\lambda \bar g_\lambda\,,
\]
and hence we obtain by \eqref{eq:21} that
\[
  \div\bar\mu_\lambda\wsto 0\,,
  \]
which is the second condition of Lemma \ref{lem:lowerreg}.

Setting
\[
  \begin{split}
    \bar \alpha_\lambda(x)&:=\alpha_\lambda(x_0+r_\lambda x)\\
    \bar \beta_\lambda(x)&:=\beta_\lambda(x_0+r_\lambda x)
  \end{split}
\]
we have by \eqref{eq:23} (assuming that
$r_\lambda^{-1}\lambda^{-1/2}\left(\|\alpha_\lambda\|_{L^2}^2+\|\beta_\lambda\|_{L^2}^2\right)\to
0$, which may be achieved by possibly modifying the sequence $r_\lambda$)
  \[
    \begin{split}
      \|\lambda^{-1/4}\div\bar\mu_\lambda\|_{W^{-1,2}(Q)}&\lesssim
      \lambda^{-1/4}\left(\|\bar\alpha_\lambda\|_{L^2(Q)}+r_\lambda \|\bar \beta_\lambda\|_{L^2(Q)}\right)\\
            &\to
      0
    \end{split}
  \]
      This is just  the third condition of Lemma \ref{lem:lowerreg}.

Finally we observe  that
  \[
    \begin{split}
    \int_Q \lambda^{-1/2}\left|\frac{\d\bar\mu_\lambda}{\d\L^n}\right|^2\d x&\lesssim
    \int_{\{\rho^{(n)}(\mu_\lambda)\leq \sqrt{\lambda}\}} \left|\frac{\d\bar\mu_\lambda}{\d\L^n}\right|\d x +
    \int_{\{\rho^{(n)}(\mu_\lambda)\geq \sqrt{\lambda}\}} \left(\lambda^{-1/2}\left|\frac{\d\bar\mu_\lambda}{\d\L^n}\right|^2+\lambda^{1/2}\right)\d x\\
    &\lesssim \int_Q h_\lambda\left(\frac{\d\bar\mu_\lambda}{\d\L^n}\right)\d x\\
    &<C
  \end{split}
\]
which proves boundedness of $\lambda^{-1/4}{\d\bar\mu_\lambda}/{\d\L^n}$ in $L^2$, the last
condition in Lemma \ref{lem:lowerreg}.

The application of Lemma \ref{lem:lowerreg} yields
    \begin{equation}
\label{eq:11}   \frac{\d\pi}{\d\L^n}(x_0)\geq h\left(\frac{\d\mu}{\d\L^n}(x_0)\right)\,.
  \end{equation}

  \medskip

  For $|\mu^s|$ almost every $x_0\in\overline\Omega$, we have that
  \[
    \begin{split}
      \frac{\d\pi}{\d |\mu^s|}(x_0)&=\lim_{r\to
        0}\frac{1}{|\mu|(Q(x_0,r))}\lim_{\lambda\to\infty}\int_{Q(x_0,r)}h_\lambda\left(\frac{\d\mu_\lambda}{\d\L^n}\right)\d x\\
      &\geq h\left(\frac{\d \mu}{\d|\mu|}(x_0)\right)\,,
      \end{split}
    \]
    where the last inequality is obtained by Proposition
    \ref{prop:lowersing}. 

    Hence we have shown
    \[
      \begin{split}
        \pi(\overline\Omega)&\geq \int_\Omega h\left(\frac{\d\mu}{\d\L^n}\right)\d x
        + \int_{\overline\Omega} h\left(\frac{\d\mu}{\d|\mu|}\right)\d|\mu^s|\\
        &=\int_{\overline\Omega} h\d\mu\\
        &=\mathcal G_{\infty,g}(\mu)\,.
      \end{split}
    \]
    This completes the proof of the lower bound.
   \end{proof}

  \section{Compactness, upper bound}
  \begin{proof}[Proof of compactness in Theorem \ref{thm:main}]
We have that $|\mu_\lambda|\leq h_\lambda(\mu_\lambda)\leq \tilde h_\lambda(\mu_\lambda)$, and hence the
statement follows from the standard compactness result for sequences in $\M(\overline\Omega;\Rsym)$ in
the weak * topology.
  \end{proof}

\begin{proof}[Proof of the upper bound in Theorem \ref{thm:main}] We may assume
  that $\mathcal G_{\infty,g}(\mu)<\infty$, otherwise there is nothing to show.
We consider $g_\lambda,g$ as
measures in $\mathcal M(\R^n;\R^n)$ with support in $\overline\Omega$. 

  We observe that $\mathcal M(\R^n;\R^n)\subset W^{-1,
    p}(\R^n;\R^n)$ for $p\in
(1,\frac{n}{n-1})$ with compact embedding. 
Now we
 apply standard results for strongly elliptic equations
with constant coefficients:
 Let $\zeta_\lambda\in W ^{1,p}_{\mathrm{loc}}(\R^n;\R^n)$ be the  solution of
  \[
    \left\{\begin{split}
        -\div e(\zeta_\lambda)&=g_\lambda\quad\text{ in }\R^n\\
        \nabla \zeta_\lambda&\in L^p(\R^n;\R^{n\times n})
        \end{split}\right.
  \]
  where $e(\zeta_\lambda)=\frac12(\nabla \zeta_\lambda+\nabla\zeta_\lambda^T)$.
 The application of elliptic  regularity theory  yields
  \[
    \|\nabla \zeta_\lambda\|_{L^p(\R^n)}\lesssim \|g_\lambda\|_{W^{-1,p}(\R^n)}\,.
    \]
In the same way, we obtain a solution $\zeta$ of  
\[
    \left\{\begin{split}
        -\div e(\zeta)&=g\quad\text{ in }\R^n\\
        \nabla \zeta&\in L^p(\R^n;\R^{n\times n})
    \end{split}\right.
  \]
with
\[
  \|\nabla \zeta\|_{L^p(\R^n)}\lesssim \|g\|_{W^{-1, p}(\R^n)}\,.
  \]

\medskip
  

\medskip

  By the assumption $g_\lambda\wsto g$, the compact embedding $\mathcal M\subset
  W^{-1, p}$ and elliptic regularity,
  we have that
  \[
    e(\zeta_\lambda)\to e(\zeta)\quad\text{ in } L^{ p}(\R^n;\Rsym)\,.
  \]
 By $\lambda^{-1/4}\|g_\lambda\|_{W^{-1,2}(\R^n;\R^n)}\to 0$ and
 elliptic regularity, we have that
  \[
    \begin{split}
      \lambda^{-1/4}\|e(\zeta_\lambda)\|_{L^2(\R^n;\Rsym)}&\to 0\,.
    \end{split}
    \]
Set $\bar\mu=\mu-e(\zeta)$. Let $\eta\in C_c^\infty(\R^n)$ such that $\int \eta=1$
and $\eta_\e:=\e^{-n}\eta(\cdot/\e)$. Choose a monotone decreasing sequence
$\e(\lambda)$ with $\e(\lambda)\downarrow 0$ as $\lambda\to\infty$ and
$\frac{|\mu|}{\e(\lambda)^{n}}\sup|\eta| \leq \frac14\sqrt{\lambda}$. We set 
\[
  \bar\mu_\lambda:=\eta_\e*\bar\mu
\]
and
\[
  \mu_\lambda=\bar\mu_\lambda+e(\zeta_\lambda)\,.
\]
Note that these definitions imply in particular that
$|\bar \mu_\lambda|\leq \frac14\sqrt{\lambda}$. Furthermore  we have that
$\mu_\lambda\wsto \mu$.


Let
\[
  \begin{split}
     A_\lambda&:=\{x\in \Omega: \rho^{(n)}(\mu_\lambda)\geq \sqrt\lambda\}\\
    \tilde A_\lambda&:=\{x\in \Omega: |e(\zeta_\lambda)|\geq \frac14\sqrt\lambda\}\,.
  \end{split}
\]
By $\rho^{(n)}(\xi)\leq 2|\xi|$ for all $\xi \in \Rsym$, we have that
$A_\lambda\subset\tilde A_\lambda$.
Now we may estimate as follows (not distinguishing between measures and their densities),
\[
  \begin{split}
    \limsup_{\lambda\to\infty} \int_\Omega h_\lambda(\mu_\lambda)\d x
    &=\limsup_{\lambda\to\infty} \left(\int_{\Omega\setminus A_\lambda} h_\lambda(\mu_\lambda)\d x+\int_{A_\lambda} h_\lambda(\mu_\lambda)\d x\right)\\
    &\leq \limsup_{\lambda\to\infty} \int_{\Omega\setminus A_\lambda} h(
     \mu_\lambda)\d x+ \int_{A_\lambda}
     \left(\frac{\mu_\lambda^2}{\sqrt{\lambda}}+\sqrt{\lambda}\right)\d x\,.
   \end{split}
 \]
 Now we have that
$\lambda^{-1/2}\|e(\zeta_\lambda)\|_{L^2}^2\to 0$ and hence 
$\lambda^{1/2}\L^n(\tilde A_\lambda )\to 0$ as $\lambda\to \infty$. This implies
\[
  \begin{split}
    \int_{A_\lambda}
    \left(\frac{\mu_\lambda^2}{\sqrt{\lambda}}+\sqrt{\lambda}\right)\d x\, &\leq
    \int_{\tilde A_\lambda}
    \left(2\frac{\bar\mu_\lambda^2+|e(\zeta_\lambda)|^2}{\sqrt{\lambda}}+\sqrt{\lambda}\right)\d
    x\\
    &\leq \L^n(\tilde A_\lambda)\left(\frac{1}{8}\sqrt{\lambda}+\sqrt{\lambda}
    \right)+\lambda^{-1/2}\|e(\zeta_\lambda)\|_{L^2}^2\\
    &\to 0\quad\text{ as }
    \lambda\to \infty\,.
  \end{split}
  \]
Hence we get
\[
  \begin{split}\limsup_{\lambda\to\infty} \int_\Omega h_\lambda(\mu_\lambda)\d x
      &\leq \limsup_{\lambda\to\infty} \int_\Omega h(\mu_\lambda)\d x\\
    &=\int_\Omega \d h( \mu)\\
    &=\G_{\infty,g}(\mu)\,.
  \end{split}
  \]
\end{proof}

\appendix

\section{Derivation of the variational  form of the compliance minimization problem}
\label{sec:derivation-variational}
Here we repeat basically our presentation from Section 2.4 of \cite{olbermann2017michell}. We include this part in order to keep the present article self-contained.

\medskip

Let $g\in W^{-1,2}(\overline \Omega;
\R^n)$. 
The aim of this appendix is to  give a  derivation of the compliance minimization problem in its
variational form, 
  \begin{equation}
  \inf\G_{\lambda;g}(\sigma)\,,\label{eq:15}
  \end{equation}
where the infimum is taken over the set
\[
  S_g(\Omega)=\{\sigma \in L^2(\Omega;\Rsym):-\div\sigma=g\}
  \]
Here the equation $-\div\sigma=g$ is to be understood as an equation in the
distributional sense in a
neighborhood of $\overline\Omega$, with $\sigma$ extended by 0 on the complement
of $\Omega$. In this way we incorporate boundary conditions in the equation, see
our discussion in Section \ref{sec:notation}.
  We want to derive this variational problem starting from the standard
formulation of a linear elasticity problem. More details can be found in  \cite{MR1859696}.

\medskip

Consider  $\Omega\subset\R^n$ as an elastic body, characterized by its
elasticity tensor $A_0\in  \mathrm{Lin}(\Rsym;\Rsym)$, where for simplicity we
assume here that
$A_0=\mathrm{Id}_{\Rsym}$ is the identity.
  We remove a subset $H\subset \Omega$ from the elastic body and the new boundaries from that
process shall be traction-free. The resulting linear elasticity problem is to
find $u:\Omega\setminus H\to\R^n$ such that
\[
\begin{split}
  \sigma&=A_0 e(u)\\
  -\div \sigma&=g\quad\text{in }\overline\Omega \setminus \overline H\\
 \sigma\cdot n&=0\quad \text{on
  }\partial H\,,
\end{split}
\]
where $e(u)=\frac12(\nabla u+\nabla u)^T$. 
The compliance (work done by the load) is given by 
\[
c(H)=\int_{\partial\Omega}g\cdot u\d\H^1=\int_{\Omega\setminus H}(A_0
e(u)):e(u)\d x\,,
\]
where $u:\Omega\setminus H\to \R^2$ is the unique solution to the linear elasticity system above.
We want to minimize the compliance under a constraint on the ``weight'' $\L^2(\Omega\setminus H)$.
We do so by the introduction of a Lagrange multiplier $\lambda$, and are
interested in the minimization problem
\[
\min_H \left(c(H)+\lambda \L^2(\Omega\setminus H)\right)\,.
\]
The ``equivalence'' between the mass constrained problem and the problem
including a Lagrange multiplier only holds on a heuristic level, see
\cite{MR820342} for a discussion of this point.
Accepting this step, taking the limit of vanishing
weight corresponds  to the limit
$\lambda\to\infty$. 
We now rewrite the problem by  considering space-dependent elasticity  tensors 
of the form $A(x)=\chi(x)A_0$, where $\chi\in L^\infty(\Omega;\{0,1\})$.
The equations from above turn into the system

\begin{equation}
\begin{split}
  \sigma=&A(x) e(u)\\
  -\div \sigma=&g\quad\text{in }\overline\Omega
\end{split}\label{eq:55}
\end{equation}
The compliance turns into a functional on the set of permissible elasticity tensors,
and is given by 
\[
c(A)=\int_{\Omega}(A(x)
e(u)):e(u)\d x\,,
\]
where $u$ is the solution of \eqref{eq:55}.
By the principle of minimum complementary energy,  the compliance
can be written as
\[
c(A)=\int_\Omega G(A(x),\sigma(x))\d x\,,
\]
where
\[
G(\bar A,\xi)=\begin{cases} +\infty & \text{ if }\xi\neq 0 \text{ and }\bar A=0\\
0 & \text{ if }\xi= 0 \text{ and }\bar A=0\\
(\bar A^{-1}\xi):\xi&\text{ else, } \end{cases}
\]
and $\sigma\in L^\infty(\Omega;\Rsym)$ is a solution of the PDE
\[
\begin{split}
  -\div\sigma&=g\quad\text{ in }\overline\Omega\,,
\end{split}
\]
i.e., $\sigma\in S_g(\Omega)$.
We see that the compliance minimization problem can be understood as the
variational problem of finding the infimum 
\[
\inf\left\{ \int_\Omega \left(G(\chi(x) A_0,\sigma(x))+\lambda \chi(x)\right)\d x:
  \chi\in L^\infty(\Omega;\{0,1\}),\,\sigma\in S_g(\Omega)\right\}\,.
\]
Of course, the compliance of a pair $(\chi,\sigma)$ is infinite if  there exists
a set of 
positive measure $U$ such that $\chi=0$ and $\sigma\neq 0$ on $U$. Hence the
above variational problem is equivalent with 
\begin{equation}
\inf\left\{ \int_\Omega F^{A_0}_\lambda(\sigma)\d x:\sigma\in S_g(\Omega)
  \right\}\,,\label{eq:57}
\end{equation}
where
\begin{equation*}
F^{A_0}_\lambda(\xi)=\begin{cases}0&\text{ if }\xi=0\\
(A_0^{-1}\xi):\xi+\lambda&\text{ else,}\end{cases}
\end{equation*}
Up to a factor $\lambda^{-1/2}$, this is just the integrand \eqref{eq:13}, and
hence \eqref{eq:57} is just the variational problem \eqref{eq:15}, with $A_0=\mathrm{Id}_{\Rsym}$.
As is well known, this problem  does not possess a
solution in general and requires relaxation.

\red{
\section{A very brief presentation of Michell trusses}

\label{sec:very-brief-pres}

In this section, we want to sketch very briefly how the limit integral functional
$\G_{\infty,g}$ is linked to Michell truss theory for the case $n=2$. What we say here is mainly taken
from \cite{bouchitte2008michell}.

\medskip

A \emph{truss} is a finite union of \emph{bars} (line segments that
      can resist compression or tension parallel to them) between points
      $x_i\in\R^2$,      $i=1,\dots,M$.
      We write $(x_1,\dots,x_m)=x\in \R^{2\times M}$, and let $w\in \R^{M\times M}_{\mathrm{sym}}$.
To every bar $[x_i,x_j]=\{tx_i+(1-t)x_j:t\in[0,1]\}$, we associate 
$w_{ij}$, where $|w_{ij}|$ is the \emph{strength} of the bar, and the sign of
$w_{ij}$ is chosen according to whether the bar has to withstand compression or tension. 

      The force provided the bar $[x_i,x_j]$ is given by
    \[
    f_{ij}(x,w)=\left(\delta_{x_i}-\delta_{x_j}\right)w_{ij}\frac{x_i-x_j}{|x_i-x_j|} \,.   \]

  The set of all bars shall counterbalance a given force

\[
g= \sum_{i=1}^N g_i\delta_{y_i}
\]
where $y_i, g_i\in \R^2$, $i=1,\dots,N$, are given. The \emph{truss} $\left(\{x_i\}_{i=1,\dots,M},\{w_{ij}\}_{i,j=1,\dots,M}\right)$
  withstands $g$ if
\[
g+ \sum_{i,j=1,\dots,M} f_{ij}(x,w)=0\,.
\]
The \emph{weight} of the truss $(x,w)$ is given by 
\[
{\mathcal W}(x,w)=\sum_{i,j=1}^M  |w_{ij}||x_i-x_j|\,.
\]
The task is now the minimization of the weight, given the external forces, as a
function of $x,w$. To express how this variational problem relates to
$\G_{\infty,g}$, we note that the force supplied by the bars can be written as the divergence of a
  stress, $f_{ij}(x,w)=\div \sigma_{ij}(x,w)$ with 
\[
\begin{split}
  \sigma_{ij}(x,w)&= w_{ij}\frac{x_i-x_j}{|x_i-x_j|}\otimes
  \frac{x_i-x_j}{|x_i-x_j|} \H^1|_{[x_i,x_j]}\\
  \sigma(x,w)&=\sum_{i,j=1}^M \sigma_{ij}(x,w)\in \mathcal M(\R^2;\R^{2\times 2}_{\mathrm{sym}})\,.
\end{split}
\]
With this notation, the balance of forces becomes the equation

\[
  -\div\sigma=g\,,
\]
and the weight of the truss is given by $\mathcal
W(x,w)=|\sigma(x,w)|$ (the total variation of the measure $\sigma$).

\medskip

Summarizing, we are dealing with the variational problem
\[
\inf\left\{|\sigma|:\sigma=\sigma(x,w)\text{ for some truss
}(x,w),\,\,-\div\sigma=g\right\}\,.
\]

To guarantee the existence of a minimizer, this variational problem
requires relaxation, as has already been remarked by Michell in 1904
\cite{lviii185limits}. We will not discuss the derivation of the relaxation here
and refer the interested reader to \cite{bouchitte2008michell}. We only state
the result: Namely, that it becomes the variational problem defined by the
$\Gamma$-limit in the main text:

\[
\inf\left\{\rho^{(2)}(\sigma)(\R^2):\sigma\in \mathcal{M}(\R^2;\Rsym),\,\,-\div\sigma=g\right\}\,.
\] 
 Requiring additionally $\mathrm{supp}\,\sigma\subset\overline\Omega$ leads
  to
\[
\inf\left\{\rho^{(2)}(\sigma)(\overline{\Omega}):\sigma\in \mathcal{M}(\overline{\Omega};\Rsym),\,\,-\div\sigma=g\right\}\,.
\] 



}

    \bibliographystyle{alpha}
\bibliography{michell3d}

\end{document}